\newcommand{\beq}{\begin{equation}}
\newcommand{\eeq}{\end{equation}}
\newcommand{\ben}{\begin{eqnarray}}
\newcommand{\een}{\end{eqnarray}}
\newcommand{\beno}{\begin{eqnarray*}}
\newcommand{\eeno}{\end{eqnarray*}}
\newtheorem{thm}{Theorem}[section]
\newtheorem{lem}[thm]{Lemma}
\newtheorem{rmk}[thm]{Remark}
\begin{document}

\title{\Large \bf On the Lane-Emden conjecture}

\author{Kui Li$^{a}$ ~~ Zhitao Zhang$^{b,c,}$\thanks{Corresponding author, supported by NSF of China (No. 11325107,11771428)}\\
{\small $^{a}$ School of Mathematics and Statistics, Zhengzhou University,
Zhengzhou,}\\
{\small Henan 450001, P. R. China. E-mail: likui@zzu.edu.cn}\\
{\small $^{b}$  Academy of Mathematics and Systems Science, Chinese Academy of }\\
{\small  Sciences, Beijing 100190;}\\
{\small  $^{c}$   School of Mathematical Sciences,~ University of Chinese Academy of }\\
{\small Sciences, Beijing 100049,  P. R. China. Email: zzt@math.ac.cn}}
\date{}
\maketitle

\renewcommand{\theequation}{\thesection.\arabic{equation}}
\setcounter{equation}{0}

\begin{abstract}
We consider the Lane-Emden conjecture which states that there is no non-trivial non-negative solution for the Lane-Emden system whenever the pair of exponents is subcritical. By Sobolev embeddings on $S^{N-1}$ and scale invariance of the solutions, we show this conjecture holds in a new region. Our methods can also be used to prove the Lane-Emden conjecture in space dimension $N\leq4$, that is to give a different proof of the main result in \cite{S}.
\par \textbf{Keywords}: Liouville-type theorems; Lane-Emden conjecture
\par \textbf{AMS Subject Classification (2010)}: 35J60, 35B33, 35B45
\end{abstract}

\section{Introduction}
\par Liouville-type results for the nonlinear equation
\begin{equation}\label{leequ}
 -\Delta u=u^p,~~~x\in \mathbb{R}^N
\end{equation}
have received considerable  attention and have many applications both for elliptic and parabolic equations (see for instance \cite{BM}, \cite{F}, \cite{FY}, \cite{GS}, \cite{QS}, \cite{Z}).\par
 Let
\begin{equation*}
 p_S=\left\{
\begin{aligned}
 &\infty,~~~&\mbox{if}~~N=2, \\
 &(N+2)/(N-2),~~~&\mbox{if}~~N\geq3.
\end{aligned}
\right.
\end{equation*}
Gidas and Spruck \cite{GS} showed that equation \eqref{leequ} had no positive classical solutions if $1<p<p_S$. Later, Chen and Li \cite{CL1} gave a new proof by the Kelvin transform and the method of moving planes.\par
The natural extension of equation \eqref{leequ} is the following Lane-Emden system
\begin{equation}\label{lesys}
\left\{ \begin{aligned}
 &-\Delta u=v^p,~~~ & x\in \mathbb{R}^N,\\
 &-\Delta v=u^q,~~~ & x\in \mathbb{R}^N.
  \end{aligned} \right.
\end{equation}\par
For given positive constants $p$ and $q$, we call the pair $(p,q)$ subcritical if the pair $(p,q)$ lies below the Sobolev hyperbola, i.e.
\begin{equation}\label{subcri}
p>0,~q>0~\mbox{and}~\frac{1}{p+1}+\frac{1}{q+1}>1-\frac{2}{N}.
\end{equation}
If $pq\neq1$, we define
\begin{equation}\label{alphabeta}
\alpha=\frac{2(p+1)}{pq-1},~~~\beta=\frac{2(q+1)}{pq-1}.
\end{equation}\par
Suppose that $pq\neq1$ and $(u,v)$ is a solution of system \eqref{lesys}.  Then $(R^\alpha u(Rx), R^\beta v(Rx))$ is also a solution of system \eqref{lesys} for any $R>0$. This scale invariance is very important in our proofs.\par
Mitidieri \cite{M} showed that the system \eqref{lesys} had no positive radial solutions if and only if pair $(p,q)$ was subcritical, which implies the following  conjecture (see \cite{FF}, \cite{FG}, \cite{SZ1}) is true for the positive radial solutions.\par
\textbf{Lane-Emden Conjecture.} \emph{If the pair $(p,q)$ is subcritical, then system \eqref{lesys}
has no positive classical solutions.}\par

Serrin and Zou \cite{SZ1} showed this conjecture held if $pq\leq1$ or $pq>1$ and $\max\{\alpha,\beta\}\geq N-2$. Pol\'{a}\v{c}ik  et al. \cite{PQS} established a Doubling Lemma and showed that if $pq>1$, then the non-existence of  bounded positive classical solutions for system \eqref{lesys} implied the non-existence of positive classical solutions, and proved this conjecture for dimension $N=3$. Souplet \cite{S} proved this conjecture for dimension $N=4$.\par
By Sobolev embeddings on $S^{N-1}$ and scale invariance of the solutions, we show this conjecture holds in a new region and have the following theorem.
\begin{thm}\label{mainthm1}
Suppose  that the pair $(p,q)$ is subcritical. If $\min\{p,q\}\leq1$, then system \eqref{lesys} has no positive classical solutions.
\end{thm}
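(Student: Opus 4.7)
I would first trim the parameter region using the previously known Liouville theorems. By symmetry assume $p\leq q$, so the hypothesis becomes $p\leq 1$. The case $pq\leq 1$ is the Serrin-Zou theorem, so I may assume $pq>1$ and the exponents $\alpha,\beta$ of (1.4) are defined, with $\alpha\leq\beta$. A second application of Serrin-Zou rules out the case $\max\{\alpha,\beta\}=\beta\geq N-2$. The remaining region in which I have to work is
\begin{equation*}
 p\leq 1,\qquad pq>1,\qquad \beta<N-2,\qquad \tfrac{1}{p+1}+\tfrac{1}{q+1}>1-\tfrac{2}{N},
\end{equation*}
and by the doubling lemma of Pol\'a\v cik, Quittner and Souplet it is enough to rule out \emph{bounded} positive classical solutions in this region.

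The next step is to move to an Emden-Fowler cylinder. Set $r=|x|$, $t=\ln r$, $\sigma=x/r$ and
\begin{equation*}
 U(t,\sigma)=r^{\alpha}u(x),\qquad V(t,\sigma)=r^{\beta}v(x).
\end{equation*}
The rescaling $(u,v)\mapsto (R^{\alpha}u(R\cdot),R^{\beta}v(R\cdot))$ becomes the translation $(U,V)(t,\sigma)\mapsto (U,V)(t+\ln R,\sigma)$, so the transformed system is autonomous on $\mathbb{R}\times S^{N-1}$. I would then multiply the two transformed equations by $U^{a-1}$ and $V^{b-1}$ for carefully chosen $a,b>0$, integrate over a slab $(-T,T)\times S^{N-1}$, and combine the resulting identities. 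The tangential parts of the Laplacian produce $\int|\nabla_\sigma U^{a/2}|^{2}$ and $\int|\nabla_\sigma V^{b/2}|^{2}$ on each spherical slice, and the Sobolev embedding
\begin{equation*}
 \|w\|_{L^{2(N-1)/(N-3)}(S^{N-1})}^{2}\leq C\bigl(\|\nabla_\sigma w\|_{L^{2}(S^{N-1})}^{2}+\|w\|_{L^{2}(S^{N-1})}^{2}\bigr)
\end{equation*}
(with the usual modifications when $N\leq 3$) converts these Dirichlet energies into control of a superlinear spherical norm of $U$ and $V$ that can be matched with the right-hand sides $V^{p}$ and $U^{q}$.

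The condition $p\leq 1$ is what makes the matching possible: concavity of $s\mapsto s^{p}$ on $[0,\infty)$ lets Jensen's inequality on $S^{N-1}$ pull the exponent outside, trading $\int_{S^{N-1}}V^{p}$ for a power of $\int_{S^{N-1}}V$, which is precisely the linear quantity the Sobolev side provides. Combined with the autonomy in $t$, I expect to obtain an integral inequality for a scale-invariant ``energy'' $\Phi$ of $(U,V)$ on slabs of unit length, which on the one hand must be independent of $t$ (by translation invariance), and on the other hand must be strictly contracted by the coercive term coming from Sobolev, forcing $\Phi\equiv 0$ and hence $u\equiv v\equiv 0$.

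The main obstacles I anticipate are (i) a priori finiteness of all the integrals, which requires combining the Serrin-Zou decay estimates at infinity with the Pol\'a\v cik-Quittner-Souplet reduction to bounded solutions, and (ii) choosing the test-function exponents $a,b$ so that the Sobolev conjugate on $S^{N-1}$, the exponents $p,q$, and the scaling weights $\alpha,\beta$ conspire with the strict inequality in the subcritical condition. Lining up this numerology while staying inside the regime $\beta<N-2$ seems to be the delicate part of the argument.
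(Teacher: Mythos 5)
Your reduction to bounded solutions with $pq>1$ and $\max\{\alpha,\beta\}<N-2$ matches the paper's, and the idea of combining Sobolev embeddings on $S^{N-1}$ with scale invariance is indeed the engine of the proof. But the core of your outline has two genuine gaps.

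First, you never invoke a Pohozaev--Rellich identity. Multiplying the (cylinder) equations by $U^{a-1}$ and $V^{b-1}$ and integrating over slabs produces variational-type identities of Serrin--Zou type; such estimates by themselves only reach the region $\max\{\alpha,\beta\}\geq N-2$, which you have already set aside. The strict subcriticality $\frac{1}{p+1}+\frac{1}{q+1}>1-\frac{2}{N}$ enters the paper's proof only through the Pohozaev-type inequality of Lemma \ref{ph}, which bounds $(\frac{N}{q+1}-\frac{N-2}{2}-\lambda)\int_{B_R}u^{q+1}+(\frac{N}{p+1}-\frac{N-2}{2}+\lambda)\int_{B_R}v^{p+1}$ by pure boundary terms on $S_R$. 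Without such an identity there is no mechanism that turns the subcritical condition into a sign, and your anticipated ``strict contraction'' has no source. Relatedly, the paper uses $\min\{p,q\}\leq1$ not through concavity of $s\mapsto s^p$ and Jensen, but through the interpolation exponent count in Lemma \ref{decayforu}: the boundary integral $\int_{S_{r_0}}u^{q+1}$ is controlled by $F(4)^{a}$ with $a=\frac{(q+1)t}{2}<1$ precisely because $q+1\leq2$ and $t<1$.

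Second, your closing argument is not valid as stated: translation invariance of the autonomous system on the cylinder does not force a slab energy $\Phi$ of a \emph{particular} solution to be independent of $t$ --- the solution itself is not translation invariant, only the equation is. What actually closes the loop is quantitative: a measure argument selects $r_0\in[1,2]$ on whose sphere all boundary quantities are controlled by volume integrals over $B_4$; combining this with Lemma \ref{ph} gives $\int_{B_1}u^{q+1}\leq C[F(4)^{\frac{p+a}{p+1}}+1]$; rescaling by $(R^\alpha u(Rx),R^\beta v(Rx))$ converts this into $F(R)\leq C[F(4R)^{\frac{p+a}{p+1}}R^{-\epsilon}+R^{-\delta}]$ with $\epsilon,\delta>0$; and a doubling sequence $R_i\to\infty$ with $F(4R_i)\leq cF(R_i)$ then forces $F$ to be bounded, hence $u\equiv v\equiv0$. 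You would need to supply both the Pohozaev identity and this feedback step to turn your outline into a proof.
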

Our methods can be used to give a different proof of the following theorem which is the main result in \cite{S}.
\begin{thm}\label{mainthm}
Suppose that the pair $(p,q)$ is subcritical. If $pq\leq1$ or $pq>1$ and $\max\{\alpha,\beta\}>N-3$, then system \eqref{lesys} has no positive classical solutions.
\end{thm}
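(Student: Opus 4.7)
The case $pq\le 1$ is contained in Theorem~\ref{mainthm1}, since $pq\le 1$ forces $\min\{p,q\}\le 1$; more generally, any sub-case of the remaining range with $\min\{p,q\}\le 1$ is already covered. It therefore suffices to treat $p,q>1$ (in particular $pq>1$), and, after swapping $u$ and $v$ if necessary, $\alpha\ge\beta$ with $\alpha>N-3$. I would argue by contradiction, assuming a positive classical solution $(u,v)$ exists.

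The plan is first to invoke the doubling-rescaling lemma of Pol\'a\v{c}ik--Quittner--Souplet cited in the introduction to reduce to the case of bounded positive solutions, and then to exploit the combination of Sobolev embeddings on $S^{N-1}$ with the scale invariance $(u,v)\mapsto\bigl(R^{\alpha}u(R\,\cdot\,),R^{\beta}v(R\,\cdot\,)\bigr)$ announced in the abstract. Concretely, I would work in logarithmic-spherical coordinates $(t,\theta)=(\log|x|,x/|x|)$ with the rescaled profiles $U(t,\theta)=e^{\alpha t}u(e^t\theta)$ and $V(t,\theta)=e^{\beta t}v(e^t\theta)$, which turn the scale invariance into $t$-translations on the cylinder $\mathbb{R}\times S^{N-1}$ and produce uniformly elliptic equations with bounded coefficients for $U,V$.

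The core step will be an integral estimate obtained by multiplying the two equations of \eqref{lesys} by test functions of the form $\eta\,u^{s-1}$ and $\eta\,v^{s'-1}$ with a smooth cut-off $\eta$ supported in a dyadic annulus, integrating by parts, and decomposing the resulting integrals into radial and spherical components. The crucial input, not exploited by Serrin--Zou, is the sphere Sobolev embedding
\[
H^1(S^{N-1})\hookrightarrow L^{\frac{2(N-1)}{N-3}}(S^{N-1}),\qquad N\ge 4,
\]
which converts angular $L^2$-control of the tangential gradients into higher integrability of $u$ and $v$ on each sphere. Matching this gain against the radial scaling weights $\alpha,\beta$ should shift the critical exponent governing the ODE-type analysis of spherical averages by exactly one unit, from the Serrin--Zou threshold $N-2$ down to $N-3$.

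Combined with a Rellich--Pohozaev identity on dyadic annuli and the scale invariance, these estimates should be incompatible with a non-trivial solution whenever $\alpha>N-3$ under the subcriticality condition, yielding the desired contradiction. The hard part will be the calibration of the exponents $s,s'$ and of the test functions so that every remainder term produced by the integrations by parts either scales away or can be absorbed, and so that the spherical Sobolev exponent couples exactly with the scale-invariant weights $\alpha,\beta$. A secondary subtlety arises at $N=4$, where $2(N-1)/(N-3)=6$: the embedding is still finite but the improvement margin is tight, and the boundary contributions in the Pohozaev identity must be tracked carefully in order to recover precisely the theorem of \cite{S}.
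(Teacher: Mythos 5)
Your reductions are sound: $pq\le 1$ indeed forces $\min\{p,q\}\le 1$, so Theorem~\ref{mainthm1} disposes of that case, and the passage to bounded solutions via the doubling lemma of \cite{PQS} is exactly how the paper proceeds. Your overall strategy --- Pohozaev-type identity, Sobolev embeddings on $S^{N-1}$, scale invariance, and a feedback inequality for a truncated energy --- is also the strategy of the paper (and of \cite{S}). But what you have written is a plan, not a proof, and the gap sits precisely at the step you defer: you never show \emph{how} the hypothesis $\alpha>N-3$ enters, beyond asserting that the spherical embedding ``should shift the critical exponent by exactly one unit.'' That is the entire content of the theorem. In the paper the mechanism is quantitative: after selecting a good radius $r_0\in[1,2]$ by a pigeonhole argument, one interpolates $\|u(r_0)\|_{L^{q+1}(S^{N-1})}$ between the a priori bounded norm $\|u(r_0)\|_{L^{\gamma(N-1)/(N-3)}}$ (coming from the uniform $H^1(S^{N-1})$ bound on $u^{\gamma/2}$, $\gamma<1$, which in turn rests on the Serrin--Zou gradient inequality and the decay $\overline{u}(r)\le Cr^{-\alpha}$) and the norm $\|u(r_0)\|_{L^{\nu}}$ with $\frac1\nu=\frac{p}{p+1}-\frac{2}{N-1}$, controlled through the $W^{2,(p+1)/p}(S^{N-1})$ embedding, interior $L^p$ elliptic estimates, and Lemma~\ref{compair} by $F(4)=\int_{B_4}u^{q+1}$. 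The resulting exponent satisfies $b(1)-1=\frac{2(q+1)}{(N-1)\beta}(N-3-\alpha)$, which is negative exactly when $\alpha>N-3$; this computation, together with the analogous estimate $\|Du\|_{L^{(p+1)/p}(S_{r_0})}\le C[F(4)^{p/(p+1)}+1]$ for the boundary terms of the Pohozaev identity, is what makes the final iteration $F(R_i)\le C[F(4R_i)^{\theta}R_i^{-\delta}+R_i^{N-2-\alpha-\beta}]$ with $\theta<1$, $\delta>0$ close. None of this calibration appears in your sketch.

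Two smaller points. First, your proposed passage to cylinder variables $(t,\theta)$ is not needed and is not what the paper does; the paper stays in physical variables, proves all estimates on the fixed annulus $1\le|x|\le 2$, and only then invokes scale invariance by replacing $(u,v)$ with $(R^{\alpha}u(R\cdot),R^{\beta}v(R\cdot))$. Second, the embedding you single out, $H^1(S^{N-1})\hookrightarrow L^{2(N-1)/(N-3)}(S^{N-1})$, is the one used for Theorem~\ref{mainthm1}; for the present theorem the decisive embedding is the second-order one, $W^{2,(p+1)/p}(S^{N-1})\hookrightarrow L^{\nu}(S^{N-1})$, applied to $u$ itself rather than to a power of $u$, and it is this change that converts the threshold from $\min\{p,q\}\le 1$ into $\alpha>N-3$. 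Until you carry out the interpolation and verify that the resulting exponent is strictly less than $1$ under your hypotheses, the argument is incomplete.
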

\begin{center}
\includegraphics[height=2.5in]{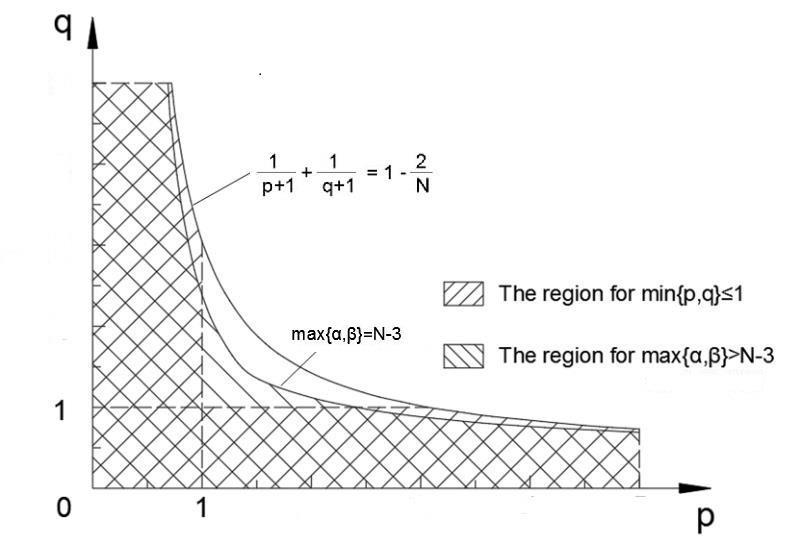}\\
Figure~1.~~The~regions~obtained in Theorem \ref{mainthm1} and \ref{mainthm} when $N=8$.
\end{center}

\begin{rmk}
Suppose that $N\geq 5$ and $u$ is a positive solution of the fourth order equation
\begin{equation}\label{leequ1}
\Delta^2 u=u^p,~~~x\in \mathbb{R}^N
\end{equation}
with $1<p<\frac{N+4}{N-4}$. Then $-\Delta u>0$ (see \cite{WX}). Hence $(u,v)$ with $v=-\Delta u$ is a positive solution of system \eqref{lesys} with $q=1$. Therefore, Theorem \ref{mainthm1} covers Theorem 1.4 \cite{L} and partial results of Theorem 1.4 \cite{WX}.\par
\end{rmk}

Let $B_R=\{x\in\mathbb{R}^N: |x|<R\}$, $S_R=\{x\in\mathbb{R}^N: |x|=R\}$ and $S^{N-1}=S_1$. For $0\neq x\in \mathbb{R}^N $, let $\theta=\frac{x}{|x|} \in S^{N-1}$ and $r=|x|$.  For $k\in [1,\infty]$, $w\in L^k(S^{N-1})$, we set $||w||_k=||w||_{L^k(S^{N-1})}$. $C(N,p,q,\cdots)$, $a(N,p,q,\cdots)$ and $b(N,p,q,\cdots)$ denote positive constants which are dependent on $N$, $p$, $q$, $\cdots$ and independent of the solution $(u,v)$ of system \eqref{lesys}.\par

This paper is organized as follows: in section 2, we give some preliminaries; in section 3, we prove Theorem \ref{mainthm1}; and in section 4, we prove Theorem \ref{mainthm}.

\section{ Preliminaries}
\numberwithin{equation}{section}
 \setcounter{equation}{0}
Without loss of generality,  we assume that $p\geq q>0$. For a continuous function $u(x)$ in $\mathbb{R}^N$, we write $u=u(r,\theta)$ and  define
$$\overline{u}(r)=\frac{1}{\omega_N}\int_{S^{N-1}}u(r,\theta)d\theta,$$
where $\omega_N=|S^{N-1}|$ and $d\theta$ is the surface measure on $S^{N-1}$. On $S^{N-1}$, we have the following Sobolev embeddings:
\begin{lem}\label{Sobolevimbedings}(Sobolev embeddings on $S^{N-1}$). Let $N\geq2$ and $j\geq1$ be integers, $k\in(1,\infty]$ and $w=w(\theta)\in W^{j,k}(S^{N-1})$.
\begin{description}
  \item[(1)] If $k<\frac{N-1}{j}$, then $||w||_\lambda\leq C(||D_\theta^jw||_k+||w||_1)$, where $C=C(j,k,n)>0$ and $\lambda$ satisfies
  $$\frac{1}{k}-\frac{1}{\lambda}=\frac{j}{N-1}.$$
  \item[(2)]  If $k=\frac{N-1}{j}$, then for any $\lambda\in[1,\infty)$, $||w||_\lambda\leq C(||D_\theta^jw||_k+||w||_1)$ with $C=C(\lambda,k,n)>0$.
  \item[(3)] If $k>\frac{N-1}{j}$, then $||w||_{\infty}\leq C(||D_\theta^jw||_k+||w||_1)$ with $C=C(j,k,n)>0$.
\end{description}
\end{lem}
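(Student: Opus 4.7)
The plan is to reduce the three embeddings on $S^{N-1}$ to the classical Sobolev embeddings on bounded open subsets of $\mathbb{R}^{N-1}$ by pulling back through a finite coordinate atlas, and then to replace the full $W^{j,k}(S^{N-1})$ norm that appears on the right by the weaker quantity $\|D_\theta^j w\|_k+\|w\|_1$ using a standard interpolation inequality on compact manifolds.

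First I would fix a finite atlas $\{(U_i,\varphi_i)\}_{i=1}^M$ of $S^{N-1}$ with $\varphi_i\colon U_i\to V_i\subset\mathbb{R}^{N-1}$ a smooth diffeomorphism, together with a smooth partition of unity $\{\eta_i\}$ subordinate to $\{U_i\}$. Each localized piece $w_i:=(\eta_iw)\circ\varphi_i^{-1}$ is compactly supported in $V_i$, and since $S^{N-1}$ is compact the Jacobians $|D\varphi_i|$ and all derivatives of $\eta_i$ are uniformly bounded, so $\|w_i\|_{W^{j,k}(V_i)}\leq C\|w\|_{W^{j,k}(S^{N-1})}$. Applying in each chart the Euclidean Sobolev embedding --- the subcritical Gagliardo--Nirenberg--Sobolev inequality when $jk<N-1$, the limiting Trudinger-type case when $jk=N-1$, and Morrey's embedding when $jk>N-1$ --- and summing over $i$ yields statements (1)--(3) with $\|w\|_{W^{j,k}(S^{N-1})}$ on the right-hand side.

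To replace this Sobolev norm by $\|D_\theta^jw\|_k+\|w\|_1$, I would appeal to the interpolation inequality
$$\|D_\theta^m w\|_k\leq C\bigl(\|D_\theta^jw\|_k+\|w\|_1\bigr),\qquad 0\leq m\leq j,$$
which holds on any compact Riemannian manifold. It can be derived by combining Gagliardo--Nirenberg interpolation for intermediate derivatives with the Poincar\'e inequality $\|w-\bar w\|_k\leq C\|\nabla_\theta w\|_k$ on $S^{N-1}$ and the elementary bound $|\bar w|\leq\omega_N^{-1}\|w\|_1$; alternatively it follows from a compactness--contradiction argument using Rellich--Kondrachov on $S^{N-1}$.

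The main technical nuisance is this last intermediate derivative estimate, which must be formulated intrinsically because $D_\theta^m$ is the $m$-th covariant derivative tensor on $S^{N-1}$. Once one passes to the same coordinate atlas used above it reduces to the Euclidean Ehrling--Nirenberg-type inequality, so the argument is entirely routine and introduces no essentially new ideas beyond those already required in the first two steps.
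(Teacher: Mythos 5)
Your argument is correct and is essentially the standard proof of this lemma. Note that the paper itself offers no proof at all: it cites Lemma 2.1 of \cite{LZ}, which in turn rests on Lemma 2.1 of \cite{S}, and the proof there is precisely your scheme --- a finite atlas with a partition of unity reducing each case to the corresponding Euclidean embedding on a bounded domain of $\mathbb{R}^{N-1}$, followed by an Ehrling/Gagliardo--Nirenberg step that downgrades the full norm $\|w\|_{W^{j,k}(S^{N-1})}$ to $\|D_\theta^j w\|_k+\|w\|_1$. The one place where the source proceeds differently is the borderline case (2): instead of invoking the limiting (Trudinger-type) Euclidean embedding in each chart, it deduces (2) from the subcritical case (1) applied with exponent $k-\varepsilon$, observing that the resulting $\lambda(\varepsilon)=k(k-\varepsilon)/\varepsilon\to\infty$ as $\varepsilon\to0^+$, and then uses Jensen's inequality twice on the finite measure space $S^{N-1}$ (once to pass from $\|D_\theta^jw\|_{k-\varepsilon}$ to $\|D_\theta^jw\|_k$, once to pass from $\lambda(\varepsilon_0)$ down to the given $\lambda$); both routes are legitimate, and in both the constant necessarily blows up as $\lambda\to\infty$. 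One point to keep straight in your compactness derivation of the intermediate estimate: to prove $\|w\|_k\le C(\|D_\theta^jw\|_k+\|w\|_1)$ by contradiction you should normalize $\|w_n\|_k=1$, control the intermediate derivatives via the standard interpolation $\|D_\theta^mw\|_k\le C(\|D_\theta^jw\|_k+\|w\|_k)$ so as to get boundedness in $W^{j,k}$, extract an $L^k$-convergent subsequence by Rellich--Kondrachov, and let the hypothesis $\|w_n\|_1\to0$ force the limit to vanish; your outline accommodates this, so there is no gap.
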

For the proof of this lemma, see Lemma 2.1 \cite{LZ}.

\begin{lem}\label{lpesti}(See \cite{CHL}, Lemma 2.2). Let $k\in (1,\infty)$ and $R>0$. For $u\in W^{2,k}(B_{2R})$, we have
\begin{equation*}
\|D^2 u\|_{L^k(B_R)}\leq C(\|\Delta u\|_{L^k(B_{2R})}+R^{\frac{N}{k}-(N+2)}\|u\|_{L^1(B_{2R})}),
\end{equation*}
where $C=C(k,N)>0$.
\end{lem}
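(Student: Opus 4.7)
The estimate is scale-invariant, so the plan is to first reduce to the case $R=1$. Setting $\tilde u(y)=u(Ry)$ on $B_2$, direct change of variables gives $\|\tilde u\|_{L^1(B_2)}=R^{-N}\|u\|_{L^1(B_{2R})}$, $\|\Delta\tilde u\|_{L^k(B_2)}=R^{2-N/k}\|\Delta u\|_{L^k(B_{2R})}$ and $\|D^2\tilde u\|_{L^k(B_1)}=R^{2-N/k}\|D^2 u\|_{L^k(B_R)}$. After cancelling $R^{2-N/k}$, the stated inequality on $B_R\subset B_{2R}$ is equivalent to
\begin{equation*}
\|D^2 u\|_{L^k(B_1)} \le C\bigl(\|\Delta u\|_{L^k(B_2)}+\|u\|_{L^1(B_2)}\bigr),
\end{equation*}
which is what I would prove.

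The next step is the classical interior Calderón--Zygmund $W^{2,k}$ bound. Picking a cutoff $\eta\in C_c^\infty(B_{3/2})$ with $\eta\equiv 1$ on $B_1$ and applying the global CZ singular-integral estimate $\|D^2 w\|_{L^k(\mathbb{R}^N)}\le C\|\Delta w\|_{L^k(\mathbb{R}^N)}$ to $w=\eta u$ (extended by zero) together with $\Delta w=\eta\Delta u+2\nabla\eta\cdot\nabla u+u\Delta\eta$ yields
\begin{equation*}
\|D^2 u\|_{L^k(B_1)} \le C\bigl(\|\Delta u\|_{L^k(B_{3/2})}+\|\nabla u\|_{L^k(B_{3/2})}+\|u\|_{L^k(B_{3/2})}\bigr).
\end{equation*}
The gradient term is absorbed via a Gagliardo--Nirenberg interpolation $\|\nabla u\|_{L^k}\le\varepsilon\|D^2u\|_{L^k}+C_\varepsilon\|u\|_{L^k}$, applied on a telescoping sequence of radii $1\le r_j<3/2$ with an iteration-on-nested-balls argument in the style of Campanato, producing
\begin{equation*}
\|D^2 u\|_{L^k(B_1)} \le C\bigl(\|\Delta u\|_{L^k(B_2)}+\|u\|_{L^k(B_2)}\bigr).
\end{equation*}

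The remaining and main obstacle is to upgrade the $L^k$ norm of $u$ on the right to an $L^1$ norm. The plan is a Newton-potential splitting: choose a slightly larger cutoff $\zeta\equiv 1$ on $B_2$, supported in $B_{7/4}$... in $B_{5/2}$, and write $u=u_1+u_2$ on $B_2$ with $u_1=\Gamma*(\zeta\Delta u)$ (the Newtonian potential of $\zeta\Delta u$) and $u_2=u-u_1$. Calder\'on--Zygmund / Hardy--Littlewood--Sobolev estimates on the convolution with $\Gamma$ bound $\|u_1\|_{L^k(B_2)}$ by $\|\Delta u\|_{L^k(B_{5/2})}$. Since $u_2$ is harmonic on $B_2$, the mean-value property gives $\|u_2\|_{L^\infty(B_2)}\le C\|u_2\|_{L^1(B_{5/2})}\le C(\|u\|_{L^1(B_{5/2})}+\|u_1\|_{L^1(B_{5/2})})$, and the last term is again controlled by $\|\Delta u\|_{L^k}$ via H\"older. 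Chaining these inequalities (on slightly inflated radii, which are later absorbed into $B_2$ by choosing the cutoffs appropriately) replaces $\|u\|_{L^k(B_2)}$ with $\|u\|_{L^1(B_2)}$ up to an extra multiple of $\|\Delta u\|_{L^k(B_2)}$, completing the proof. The delicate point will be organising the nested radii so that all intermediate sets remain inside $B_2$; this is purely bookkeeping but is the one place where the argument is not entirely routine.
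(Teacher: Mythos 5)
The paper does not prove this lemma at all: it is quoted verbatim from \cite{CHL} (Lemma 2.2 there), so there is no in-paper argument to compare against. Your proof is correct in substance and is the standard one: the scaling computation reducing to $R=1$ is right (the exponent $\frac{N}{k}-(N+2)$ is exactly what makes the two sides scale identically), the cutoff-plus-Calder\'on--Zygmund step with absorption of $\|\nabla u\|_{L^k}$ by interpolation on nested balls is routine, and the Newton-potential splitting $u=\Gamma*(\zeta\Delta u)+(\text{harmonic part})$ together with the mean-value property is the right device for downgrading $\|u\|_{L^k}$ to $\|u\|_{L^1}$ at the cost of another $\|\Delta u\|_{L^k}$ term. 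The one thing you must clean up is the radii: after rescaling, the whole argument lives in $B_2$, so the sets $B_{5/2}$ you invoke do not exist, and the cutoff description ``$\zeta\equiv1$ on $B_2$, supported in $B_{7/4}$'' is self-contradictory. The fix is to run the first step only up to $B_{3/2}$, i.e.\ prove $\|D^2u\|_{L^k(B_1)}\leq C(\|\Delta u\|_{L^k(B_{3/2})}+\|u\|_{L^k(B_{3/2})})$, then take $\zeta\equiv1$ on $B_{7/4}$ with support in $B_{15/8}$, so that $u_2=u-\Gamma*(\zeta\Delta u)$ is harmonic on $B_{7/4}$ and the mean-value property on balls of radius $1/8$ bounds $\|u_2\|_{L^\infty(B_{3/2})}$ by $\|u_2\|_{L^1(B_{7/4})}\leq \|u\|_{L^1(B_2)}+C\|\Delta u\|_{L^k(B_2)}$. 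With that ordering every intermediate set sits inside $B_2$ and the proof closes; as you say, this is bookkeeping, but as written the chain of inclusions does not parse.
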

\begin{lem} \label{esti} Suppose that $pq>1$ and $(u,v)$ is a positive solution of system \eqref{lesys}. Then for any $r>0$,
\begin{equation*}
\overline{u}(r)\leq C r^{-\alpha},~\overline{v}(r)\leq C r^{-\beta},~\int_{B_r}u\leq C r^{N-\alpha}~\mbox{and}~\int_{B_r}v\leq C r^{N-\beta},
\end{equation*}
where $C=C(p,q,N)>0$ and $\alpha$, $\beta$ are defined in \eqref{alphabeta}.
\end{lem}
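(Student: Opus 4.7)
The plan is to establish the four estimates in three stages: derive local integral inequalities by the Serrin-Zou test-function computation, close the resulting coupled system using H\"older and the hypothesis $pq>1$, and pass from ball integrals to pointwise spherical averages via superharmonicity.

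For the first step, fix $R>0$ and pick a nonnegative cutoff $\xi\in C_c^\infty(B_{2R})$ with $\xi\equiv 1$ on $B_R$, $|\nabla\xi|\le C/R$, and $|\Delta\xi|\le C/R^2$. For an integer $m$ chosen large below, $|\Delta(\xi^m)|\le Cm^2 R^{-2}\xi^{m-2}$. Multiplying $-\Delta u=v^p$ and $-\Delta v=u^q$ by $\xi^m$ and integrating by parts yields
$$\int v^p\xi^m\le\frac{C}{R^2}\int_{B_{2R}} u\,\xi^{m-2},\qquad \int u^q\xi^m\le\frac{C}{R^2}\int_{B_{2R}} v\,\xi^{m-2}.$$

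For the second step, I would close the coupling using H\"older's inequality. Assume WLOG $p\ge q>0$, so $p>1$ (from $pq>1$). H\"older with exponent $p$ gives $\int v\,\xi^{m-2}\le CR^{N/p'}(\int v^p\xi^m)^{1/p}$, once $m$ is large enough that $\xi$ stays at nonnegative power, and similarly for $u$ when $q>1$. Substituting one inequality into the other and invoking $1/(pq)<1$, together with the identities $p\beta=\alpha+2$ and $q\alpha=\beta+2$, yields the scale-invariant bounds
$$\int_{B_R} v^p\le CR^{N-p\beta},\qquad \int_{B_R} u^q\le CR^{N-q\alpha},$$
and a further H\"older step delivers $\int_{B_R} u\le CR^{N-\alpha}$ and $\int_{B_R} v\le CR^{N-\beta}$. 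When $q\le 1$, the naive H\"older for $\int u$ reverses, so one instead substitutes the $v^p$-bound into $\int u^q\le CR^{-2}\int v$ and exploits $p>1/q$ to close the algebra.

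For the third step, since $-\Delta u=v^p\ge 0$ and $-\Delta v=u^q\ge 0$, both $u$ and $v$ are superharmonic, so the spherical averages $\overline{u}(r)$, $\overline{v}(r)$ are nonincreasing on $(0,\infty)$. Monotonicity then gives
$$c\,r^N\,\overline{u}(2r)\le\omega_N\int_r^{2r}\overline{u}(s)\,s^{N-1}\,ds=\int_{B_{2r}\setminus B_r} u\le Cr^{N-\alpha},$$
so $\overline{u}(r)\le Cr^{-\alpha}$, and $\overline{v}(r)\le Cr^{-\beta}$ follows identically. The main obstacle I expect is step 2: one must track the cutoff powers carefully so H\"older keeps every occurrence of $\xi$ to a nonnegative power (forcing $m$ large depending on $p,q$), anchor the bootstrap on the fact that each $\int_{B_R} v^p$ is finite by continuity of the solution, and handle the case $\min\{p,q\}\le 1$ by routing H\"older through the reversed direction, which still closes thanks to $pq>1$.
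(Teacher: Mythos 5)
The paper itself offers no proof of this lemma---it is imported wholesale from \cite{CHL} (Proposition 2.1, Corollary 2.1 and Theorem 3.1 there)---so your attempt must stand on its own. Your Steps 1 and 3 are sound: the cutoff identities $\int v^p\xi^m\le CR^{-2}\int_{B_{2R}}u\,\xi^{m-2}$ and its companion are standard, and passing from $\int_{B_{2r}\setminus B_r}u\le Cr^{N-\alpha}$ to $\overline{u}(r)\le Cr^{-\alpha}$ via the monotonicity of spherical averages of positive superharmonic functions is correct. Moreover, when $\min\{p,q\}>1$ your Step 2 genuinely closes: the exponent produced by the double substitution, multiplied by $pq/(pq-1)$, does equal $N-p\beta$, using $p\beta=\alpha+2$ and $q\alpha=\beta+2$.

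The gap is the case $q\le 1$ (with $p\ge q$, so $p>1$), and it is not a removable corner case, since $\min\{p,q\}\le1$ is precisely the regime of Theorem \ref{mainthm1}. With only the two test-function inequalities and H\"older at your disposal, the available implications are: $\int v^p$ is controlled by $R^{-2}\int u$; $\int u^q$ by $R^{-2}\int v$; $\int v$ by $R^{N(p-1)/p}\bigl(\int v^p\bigr)^{1/p}$ (legitimate since $p>1$); and---\emph{only if} $q>1$---$\int u$ by $R^{N(q-1)/q}\bigl(\int u^q\bigr)^{1/q}$. When $q\le1$ that last arrow is absent, and the chain $\int u^q\to\int v\to\int v^p\to\int u$ dead-ends at $\int u$ with no route back to any of the other three quantities; ``exploiting $p>1/q$'' cannot manufacture the missing arrow, because for $q<1$ the local $L^1$ norm of $u$ is simply not controlled by its $L^q$ norm (Jensen goes the wrong way). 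The standard repair is pointwise rather than integral: from the representation formula of Lemma \ref{integralsys} (or the weak Harnack inequality for nonnegative superharmonic functions) one gets $\inf_{B_r}u\ge c(N)\,r^{2-N}\int_{B_r}v^p$ and $\inf_{B_r}u\ge c(N)\,\overline{u}(2r)$, hence $\int_{B_r}u^q\ge c\,r^N\,\overline{u}(2r)^q$ for \emph{every} $q>0$; combined with $\overline{u}(r)\ge c\,r^{2-N}\int_{B_r}v^p$ and its counterpart for $v$, this yields $\overline{u}(r)\ge c\,r^{2(p+1)}\,\overline{u}(4r)^{pq}$, and the resulting iteration (using $\alpha(pq-1)=2(p+1)$) gives $\overline{u}(r)\le Cr^{-\alpha}$ with no convexity restriction on $p,q$; the ball-integral bounds then follow. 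Without some such pointwise input your scheme cannot prove the lemma in the range where the paper actually needs it.
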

This Lemma can be obtained by Proposition 2.1, Corollary 2.1 and Theorem 3.1 \cite{CHL} immediately.

\begin{lem} \label{compair} Suppose that $pq>1$ and $(u,v)$ is a bounded positive solution of system \eqref{lesys}. Then
\begin{equation*}
\frac{v^{p+1}}{p+1}\leq\frac{u^{q+1}}{q+1},~~~\forall~x\in~\mathbb{R}^N.
\end{equation*}
\end{lem}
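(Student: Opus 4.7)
I set $U:=u^{q+1}/(q+1)$ and $V:=v^{p+1}/(p+1)$, so the claim $\frac{v^{p+1}}{p+1}\le\frac{u^{q+1}}{q+1}$ is equivalent to asking that the scale-invariant quotient $\Psi:=V/U$ satisfies $\Psi\le 1$ on $\mathbb{R}^N$. My plan is to argue by contradiction: assuming $M:=\sup_{\mathbb{R}^N}\Psi>1$, I will produce a bounded positive solution whose $\Psi$ attains this supremum at an interior point, and derive a contradiction from a pointwise maximum-principle computation there.

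\textbf{Pointwise step at a maximum.} Suppose $(\tilde u,\tilde v)$ is a bounded positive classical solution whose $\tilde\Psi$ attains its maximum at some $x_0$. Then $\nabla\tilde\Psi(x_0)=0$ rewrites as $\tilde v^p\nabla\tilde v=\tilde\Psi\,\tilde u^q\nabla\tilde u$ at $x_0$, so that $|\nabla\tilde v|^2=\tilde\Psi^2(\tilde u^{2q}/\tilde v^{2p})|\nabla\tilde u|^2$ at $x_0$, and $\Delta\tilde\Psi(x_0)\le 0$ collapses (using $\nabla\tilde\Psi(x_0)=0$) to $\Delta\tilde V\le\tilde\Psi\,\Delta\tilde U$ at $x_0$. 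Expanding via the two PDEs $\Delta\tilde V=p\tilde v^{p-1}|\nabla\tilde v|^2-\tilde u^q\tilde v^p$ and $\Delta\tilde U=q\tilde u^{q-1}|\nabla\tilde u|^2-\tilde u^q\tilde v^p$, substituting the gradient relation for $|\nabla\tilde v|^2$, and using $\tilde\Psi(p+1)\tilde u^{q+1}=(q+1)\tilde v^{p+1}$ to simplify the $|\nabla\tilde v|^2$ term, the inequality at $x_0$ reduces to
\[
\tilde\Psi\,\frac{p-q}{p+1}\,\tilde u^{q-1}|\nabla\tilde u|^2 \;+\; (\tilde\Psi-1)\,\tilde u^q \tilde v^p \;\le\; 0.
\]
Since $p\ge q$ and $\tilde u,\tilde v>0$, the left-hand side is strictly positive whenever $\tilde\Psi(x_0)>1$ (the second term alone is), contradicting $\le 0$. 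Hence at any interior maximum of $\tilde\Psi$ one has $\tilde\Psi\le 1$.

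\textbf{Reduction to an attained maximum.} I then upgrade $\sup\Psi=M$ to an attained maximum of some limit solution. Choose $x_n$ with $\Psi(x_n)\to M$. The translates $(u_n,v_n):=(u,v)(\cdot+x_n)$ are uniformly bounded in $L^\infty$, and because $v_n^p$ and $u_n^q$ are then uniformly bounded, interior $C^{2,\alpha}$ elliptic estimates yield a subsequence converging in $C^2_{\mathrm{loc}}$ to a bounded nonnegative classical solution $(\tilde u,\tilde v)$ of the Lane--Emden system. Provided $\tilde u(0)>0$ and $\tilde v(0)>0$, the strong maximum principle applied separately to the superharmonic functions $\tilde u$ and $\tilde v$ forces $\tilde u,\tilde v>0$ everywhere, and by construction $\tilde\Psi$ attains its supremum $M$ at the interior point $0$; the pointwise step then forces $M\le 1$, the desired contradiction. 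The alternative $\tilde v(0)=0$ would give $\tilde\Psi(0)=0<M$, impossible; and $\tilde u(0)=0$ with $\tilde v(0)>0$ is excluded by a Hopf-type observation, namely $\tilde u$ would attain an interior minimum at $0$, forcing $\Delta\tilde u(0)\ge 0$ in conflict with $-\Delta\tilde u(0)=\tilde v(0)^p>0$.

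\textbf{Main obstacle.} The delicate remaining alternative is $\tilde u(0)=\tilde v(0)=0$, equivalently $M=\infty$, in which the na\"ive translation limit degenerates to the trivial solution and the previous dichotomy loses all information. To handle this I plan to exploit the scale invariance $(u,v)\mapsto(R^\alpha u(Rx),R^\beta v(Rx))$ emphasized in the introduction: choose scales $\lambda_n$ adapted to the joint vanishing rate of $(u(x_n),v(x_n))$ so that the rescaled solutions remain non-degenerate at the origin (e.g.\ normalizing $\lambda_n^\alpha u(x_n)+\lambda_n^\beta v(x_n)=1$), then use the averaged decay of Lemma \ref{esti} together with the $L^\infty$-bound on $(u,v)$ to keep the rescaled family uniformly controlled on compact sets. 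A subsequential $C^2_{\mathrm{loc}}$-limit should again be a bounded positive solution with $\tilde\Psi(0)=\infty$, which once more conflicts with the Hopf-type argument. Producing this non-trivial bounded positive limit is technically the most demanding part of the proof; once it is in hand, the pointwise computation of the second paragraph closes the argument.
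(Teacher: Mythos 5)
The paper does not prove this lemma itself; it defers to \cite{CHL} and \cite{S}, where the argument is elementary and global: with $\sigma=\frac{q+1}{p+1}\le 1$ (here $p\ge q$ is used) one sets $w=v-\sigma^{-1/(p+1)}u^{\sigma}$, uses concavity of $t\mapsto t^{\sigma}$ to get $\Delta(u^\sigma)\le -\sigma u^{\sigma-1}v^p$, checks that $w$ is subharmonic on $\{w>0\}$ (hence $w_+$ is subharmonic in the distributional sense by Kato's inequality), and then kills $w_+$ by combining the monotonicity of its spherical means with the decay $\overline u(r)\le Cr^{-\alpha}$, $\overline v(r)\le Cr^{-\beta}$ of Lemma \ref{esti}. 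Your route is genuinely different: a maximum-principle computation for the scale-invariant quotient $\Psi=V/U$ at an attained interior maximum, plus a translation/rescaling compactness argument to produce such a maximum. Your pointwise step is correct as computed (the coefficient $\frac{p-q}{p+1}\ge 0$ is exactly where $p\ge q$ enters, parallel to $\sigma\le1$ in the cited proof).

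However, the reduction to an attained maximum has a genuine gap, which you partly acknowledge. First, the case $\tilde u(0)=\tilde v(0)=0$ is \emph{not} equivalent to $M=\infty$: if $u(x_n)\to 0$ along a maximizing sequence with $M<\infty$, then $v(x_n)\to 0$ as well, so the degenerate alternative can occur for any value of $M$ and cannot be dismissed as the ``$M=\infty$'' case. Second, the proposed rescue by rescaling does not close: to renormalize $u(x_n)\to 0$ to order one you must take $\lambda_n\to\infty$ in $(\lambda_n^{\alpha}u(\lambda_n\cdot+x_n),\lambda_n^{\beta}v(\lambda_n\cdot+x_n))$, and then the sup-norm of the rescaled family is $\lambda_n^{\alpha}\|u\|_\infty\to\infty$, so the uniform local bounds needed for the $C^2_{\mathrm{loc}}$ limit are exactly what is missing; Lemma \ref{esti} only controls spherical averages centered at the origin, not at $x_n$, and does not supply them. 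Establishing local non-degeneracy and local boundedness of such a blow-up family is the substance of the doubling-lemma machinery of \cite{PQS}, which you neither invoke nor reprove. As written, the argument is therefore incomplete precisely at the step you flag as the ``main obstacle''; the cited proof via $w=v-\sigma^{-1/(p+1)}u^{\sigma}$ avoids all compactness and is the safer route.
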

For the proof, see \cite{CHL} or \cite{S}. Chen and Li \cite{CL2} proved the following result:
\begin{lem} \label{integralsys} Suppose that $(u,v)$ is a positive solution of system \eqref{lesys}. Then $(u,v)$ solves the integral system
\begin{equation}\label{integralsystem}
\left\{ \begin{aligned}
 & u(x)=\frac{1}{(N-2)\omega_N}\int_{\mathbb{R}^N}\frac{v^p(y)}{|x-y|^{N-2}}dy,~x\in\mathbb{R}^N\\
 & v(x)=\frac{1}{(N-2)\omega_N}\int_{\mathbb{R}^N}\frac{u^q(y)}{|x-y|^{N-2}}dy,~x\in\mathbb{R}^N.
  \end{aligned} \right.
\end{equation}
\end{lem}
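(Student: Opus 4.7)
The plan is to derive the integral representation via the Green's function on a ball together with monotonicity in the radius, and then eliminate an additive harmonic correction by Liouville's theorem combined with the second equation of the system.

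For any $x\in\mathbb{R}^N$ and $R>0$, the Green representation of the Poisson equation on $B_R(x)$ gives the identity
\begin{equation*}
u(x)=\overline{u}^{\,x}(R)+\frac{1}{(N-2)\omega_N}\int_{B_R(x)}\left(\frac{1}{|x-y|^{N-2}}-\frac{1}{R^{N-2}}\right)v^p(y)\,dy,
\end{equation*}
where $\overline{u}^{\,x}(R)$ is the spherical average of $u$ on $\partial B_R(x)$. Because $-\Delta u=v^p\geq 0$, $u$ is superharmonic, so $\overline{u}^{\,x}(R)$ is non-increasing in $R$ and bounded below by $0$, hence converges to some $L(x)\in[0,u(x)]$ as $R\to\infty$. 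The integral on the right is therefore uniformly bounded in $R$ by $u(x)$; on $B_{R/2}(x)$ the weight $|x-y|^{2-N}-R^{2-N}$ dominates $c\,|x-y|^{2-N}$ for some $c=c(N)>0$, so monotone convergence forces
$$w(x):=\frac{1}{(N-2)\omega_N}\int_{\mathbb{R}^N}\frac{v^p(y)}{|x-y|^{N-2}}\,dy<\infty.$$

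Next I verify that the spurious term $R^{2-N}\int_{B_R(x)}v^p\,dy$ tends to $0$. Splitting the ball at $|y-x|=\sqrt{R}$: on the inner part use $|y-x|^{2-N}\geq R^{(2-N)/2}$ to bound $\int_{|y-x|\leq\sqrt{R}}v^p\,dy\leq R^{(N-2)/2}\cdot(N-2)\omega_N\,w(x)$, which gives a prefactor $R^{(2-N)/2}\to 0$; on the outer annulus use $|y-x|^{2-N}\geq R^{2-N}$ together with the integrable tail of $w(x)$. Passing to the limit in the identity produces
$$u(x)=L(x)+w(x).$$
Since $-\Delta w=v^p=-\Delta u$, the correction $L=u-w$ is harmonic and non-negative on all of $\mathbb{R}^N$, so Liouville's theorem for non-negative harmonic functions gives $L\equiv c$ for some constant $c\geq 0$.

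To eliminate $c>0$, suppose $u\geq c$ everywhere; then $-\Delta v=u^q\geq c^q>0$, and integrating over $B_R$ via the divergence theorem yields $\omega_N R^{N-1}\overline{v}'(R)\leq -c^q\omega_N R^N/N$, whence $\overline{v}(R)\to-\infty$, contradicting $v>0$. Hence $L\equiv 0$ and $u=w$; the representation for $v$ follows by exchanging the roles of $(u,v)$ and $(p,q)$. The main technical obstacle is the vanishing of $R^{2-N}\int_{B_R(x)}v^p\,dy$: no a priori decay of $v^p$ is available at this stage (the estimates of Lemma \ref{esti} are typically derived \emph{from} this very integral representation), so the argument must rest purely on the finiteness of the Riesz potential, as in the dyadic split above.
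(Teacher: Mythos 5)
Your proof is correct and complete: the Green representation at the center of $B_R(x)$, the monotone-convergence argument giving finiteness of the Riesz potential, the dyadic split at $|y-x|=\sqrt{R}$ to kill the $R^{2-N}\int_{B_R(x)}v^p$ term, the Liouville step for the non-negative harmonic remainder, and the elimination of the constant via $-\Delta v\geq c^q$ are all sound. The paper itself offers no proof of this lemma (it simply cites Chen--Li \cite{CL2}), and your argument is essentially the standard one from that reference, so there is nothing to compare beyond noting that you have correctly reconstructed it.
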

In order to prove our theorems, we need the following energy type inequality.
\begin{lem}\label{ph}
Suppose that $(u,v)$ is a non-negative solution of system \eqref{lesys}. Then for any $R>0$ and $\lambda\in\mathbb{R}$, there holds
\begin{equation*}
\begin{aligned}
&(\frac{N}{q+1}-\frac{N-2}{2}-\lambda)\int_{B_R}u^{q+1}+(\frac{N}{p+1}-\frac{N-2}{2}+\lambda)\int_{B_R}v^{p+1}\\
\leq&R\int_{S_R}(\frac{u^{q+1}}{q+1}+\frac{v^{p+1}}{p+1})]+\lambda \int_{S_R}(\frac{\partial v}{\partial\nu}u-\frac{\partial u}{\partial\nu}v).
\end{aligned}
\end{equation*}
\end{lem}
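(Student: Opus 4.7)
The plan is to derive the inequality as a Pohozaev--Rellich-type identity, obtained by multiplying each equation of the system by a carefully chosen first-order differential operator, integrating over $B_R$, applying integration by parts, and then discarding a non-positive residual boundary quantity.

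I would begin by multiplying $-\Delta u=v^p$ by $M_1:=x\cdot\nabla v+\bigl(\tfrac{N-2}{2}-\lambda\bigr)v$, and $-\Delta v=u^q$ by $M_2:=x\cdot\nabla u+\bigl(\tfrac{N-2}{2}+\lambda\bigr)u$, and integrating each identity over $B_R$. The zero-order coefficients are chosen so that their sum equals $N-2$; this is the standard Mitidieri-type choice that will kill the bulk cross-gradient term when we add. On each right-hand side, the divergence theorem gives $\int_{B_R}v^p(x\cdot\nabla v)=-\tfrac{N}{p+1}\int_{B_R}v^{p+1}+\tfrac{R}{p+1}\int_{S_R}v^{p+1}$ (using $x\cdot\nu=R$ on $S_R$), and similarly for the $u$-equation, producing the volume coefficients $\tfrac{N}{p+1}-\tfrac{N-2}{2}\pm\lambda$ present in the statement, along with the boundary polynomial integrals $R\int_{S_R}(\tfrac{u^{q+1}}{q+1}+\tfrac{v^{p+1}}{p+1})$.

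On each left-hand side, the zero-order multiplier pieces are handled by Green's identity ($\int(-\Delta u)v=\int\nabla u\cdot\nabla v-\int_{S_R}v\,\partial_\nu u$), while the first-order pieces are computed by two successive integrations by parts. The key symmetric calculation is
\[
\int_{B_R}\!\bigl[(-\Delta u)(x\cdot\nabla v)+(-\Delta v)(x\cdot\nabla u)\bigr]=(2-N)\!\int_{B_R}\!\nabla u\cdot\nabla v-R\!\int_{S_R}\!\partial_\nu u\,\partial_\nu v+R\!\int_{S_R}\!\nabla_T u\cdot\nabla_T v,
\]
where I have used $x\cdot\nabla v=R\,\partial_\nu v$ and $\nabla u\cdot\nabla v=\partial_\nu u\,\partial_\nu v+\nabla_T u\cdot\nabla_T v$ on $S_R$. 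After summing and using the Mitidieri cancellation, the bulk $\int\nabla u\cdot\nabla v$ term vanishes. Rearranging the zero-order boundary contributions via
\[
\bigl(\tfrac{N-2}{2}-\lambda\bigr)v\,\partial_\nu u+\bigl(\tfrac{N-2}{2}+\lambda\bigr)u\,\partial_\nu v=\tfrac{N-2}{2}\,\partial_\nu(uv)+\lambda\bigl(u\,\partial_\nu v-v\,\partial_\nu u\bigr)
\]
isolates exactly the Wronskian-type term $\lambda\int_{S_R}(u\,\partial_\nu v-v\,\partial_\nu u)$ appearing on the right-hand side of the claim.

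The resulting identity has the form ``LHS of the lemma $=$ stated RHS $+\mathcal{E}(R)$'', with
\[
\mathcal{E}(R):=R\!\int_{S_R}\bigl(\partial_\nu u\,\partial_\nu v-\nabla_T u\cdot\nabla_T v\bigr)+\tfrac{N-2}{2}\!\int_{S_R}\partial_\nu(uv),
\]
so the stated inequality reduces to showing $\mathcal{E}(R)\le 0$. Proving this non-positivity is the main obstacle I expect. The natural approach is to invoke Green's identity on $S_R$, $\int_{S_R}\nabla_T u\cdot\nabla_T v=-\int_{S_R}u\,\Delta_{S_R}v$, combined with the radial--angular decomposition $\Delta=\partial_r^2+\tfrac{N-1}{r}\partial_r+\Delta_{S_R}$ and the PDE $-\Delta v=u^q$ restricted to $S_R$, together with the symmetric expression obtained by swapping the roles of $u$ and $v$. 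The ensuing expressions then need to be regrouped into a form whose non-positive sign is manifest; this regrouping is the delicate step on which the lemma hinges.
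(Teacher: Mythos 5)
Your reduction is set up correctly: the multipliers $x\cdot\nabla v+(\tfrac{N-2}{2}-\lambda)v$ and $x\cdot\nabla u+(\tfrac{N-2}{2}+\lambda)u$ do produce the stated volume coefficients and the polynomial boundary integrals, and the $\lambda$-Wronskian term is equivalent to the elementary identity $\int_{B_R}u^{q+1}-\int_{B_R}v^{p+1}=\int_{S_R}(v\,\partial_\nu u-u\,\partial_\nu v)$ that the paper also uses. But the proof then hinges entirely on $\mathcal{E}(R)\le 0$, which you leave open, and the route you sketch for it will not close. Writing $\int_{S_R}\nabla_T u\cdot\nabla_T v=-\int_{S_R}u\,\Delta_{S_R}v$ and substituting $\Delta_{S_R}v=-u^q-\partial_r^2v-\tfrac{N-1}{R}\partial_r v$ merely trades tangential gradients for second radial derivatives $\partial_r^2u,\partial_r^2v$ on $S_R$, over which you have no sign control whatsoever. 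Indeed $\mathcal{E}(R)\le 0$ is \emph{not} a consequence of the differential system alone on $B_R$ plus positivity; it is a global statement about the solution, and no amount of local regrouping on the sphere can manifest its sign. This is precisely why the classical differential Pohozaev identity (Mitidieri, Serrin--Zou, Souplet) never discards this term but instead estimates it via $L^k$ norms of $\nabla u,\nabla v$ on well-chosen spheres.

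The missing idea is the integral representation of Lemma \ref{integralsys}: since $(u,v)$ solves $u=c_N\int|x-y|^{2-N}v^p(y)\,dy$ and symmetrically for $v$, one computes $x\cdot\nabla u(x)$ through the kernel, uses $x\cdot(x-y)=\tfrac12\bigl(|x|^2-|y|^2\bigr)+\tfrac12|x-y|^2$ to symmetrize in $(x,y)$, and finds that the entire remainder equals
\begin{equation*}
I=\frac{1}{2\omega_N}\int_{B_R}\!\!\int_{B_R^c}\frac{(|x|^2-|y|^2)\bigl[u^q(x)v^p(y)+u^q(y)v^p(x)\bigr]}{|x-y|^{N}}\,dy\,dx ,
\end{equation*}
the contribution from $B_R\times B_R$ cancelling by antisymmetry of $|x|^2-|y|^2$. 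Non-positivity is then manifest because $|x|\le R\le |y|$ on the domain of integration. In other words, the sign of your $\mathcal{E}(R)$ is inherited from the Newtonian kernel, not from the PDE restricted to $S_R$; without invoking the integral system (or some equivalent global input), your argument cannot be completed.
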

\begin{proof}
Let $H(x,y)=u^q(x)v^p(y)$. By Lemma \ref{integralsys}, we have
\begin{equation*}
\left\{ \begin{aligned}
 & u(x)=\frac{1}{(N-2)\omega_N}\int_{\mathbb{R}^N}\frac{v^p(y)}{|x-y|^{N-2}}dy,~x\in\mathbb{R}^N\\
 & v(x)=\frac{1}{(N-2)\omega_N}\int_{\mathbb{R}^N}\frac{u^q(y)}{|x-y|^{N-2}}dy,~x\in\mathbb{R}^N.
  \end{aligned} \right.
\end{equation*}
Hence
\begin{equation*}
\left\{ \begin{aligned}
 & \nabla u(x)\cdot x=-\frac{1}{\omega_N}\int_{\mathbb{R}^N}\frac{x(x-y)|v^p(y)}{|x-y|^N}dy,\\
 & \nabla v(x)\cdot x=-\frac{1}{\omega_N}\int_{\mathbb{R}^N}\frac{x(x-y)|u^q(y)}{|x-y|^N}dy.
  \end{aligned} \right.
\end{equation*}
On one hand,
\begin{equation*}
\int_{B_R} u^qDu\cdot x=\int_{B_R} D(\frac{u^{q+1}}{q+1})\cdot x=\frac{R}{q+1}\int_{S_R}u^{q+1}-\frac{N}{q+1}\int_{B_R}u^{q+1}.
\end{equation*}
On the other hand,
\begin{equation*}
\begin{aligned}
&\int_{B_R} u^qDu\cdot x=-\frac{1}{\omega_N}\int_{B_R}dx\int_{\mathbb{R}^N}\frac{x(x-y)H(x,y)}{|x-y|^N}dy\\
=&-2\frac{1}{2\omega_N}\int_{B_R}dx\int_{\mathbb{R}^N}\frac{x(x-y)H(x,y)}{|x-y|^N}dy\\
=&-\frac{1}{2\omega_N}\int_{B_R}dx\int_{\mathbb{R}^N}\frac{x(x-y)H(x,y)}{|x-y|^N}dy+\frac{2-N}{2}\int_{B_R} u^{q+1}\\
&~~~~~~~~~~~~-\frac{1}{2\omega_N}\int_{B_R}dx\int_{\mathbb{R}^N}\frac{y(x-y)H(x,y)}{|x-y|^N}dy.
\end{aligned}
\end{equation*}
Hence
\begin{equation}\label{ph1}
\begin{aligned}
&(\frac{N}{q+1}-\frac{N-2}{2})\int_{B_R} u^{q+1}=\frac{R^{b+1}}{q+1}\int_{S_R}u^{q+1}\\
+&\frac{1}{2\omega_N}\int_{B_R}dx\int_{\mathbb{R}^N}\frac{(|x|^2-|y|^2)H(x,y)}{|x-y|^{N-2m+2}}dy.
\end{aligned}
\end{equation}
Similarly, we have
\begin{equation}\label{ph2}
\begin{aligned}
&(\frac{N}{p+1}-\frac{N-2}{2})\int_{B_R} v^{p+1}=\frac{R}{p+1}\int_{S_R}v^{p+1}\\
+&\frac{1}{2\omega_N}\int_{B_R}dx\int_{\mathbb{R}^N}\frac{(|x|^2-|y|^2)H(y,x)}{|x-y|^N}dy.
\end{aligned}
\end{equation}
Therefore, by \eqref{ph1} and \eqref{ph2}, we have
\begin{equation}\label{phh}
\begin{aligned}
&(\frac{N}{q+1}-\frac{N-2}{2})\int_{B_R}u^{q+1}+(\frac{N}{p+1}-\frac{N-2}{2})\int_{B_R}v^{p+1}\\
=&\frac{R^{1+b}}{q+1}\int_{S_R}u^{q+1}+\frac{R^{1+a}}{p+1}\int_{S_R}v^{p+1}+I\\
\leq&\frac{R^{1+b}}{q+1}\int_{S_R}u^{q+1}+\frac{R^{1+a}}{p+1}\int_{S_R}v^{p+1},
\end{aligned}
\end{equation}
where $I=\frac{1}{2\omega_N}\int_{B_R}dx\int_{B_R^c}\frac{(|x|^2-|y|^2)[H(x,y)+H(y,x)]}{|x-y|^{N-2m+2}}dy\leq 0$. \par
Multiplying the first equation of system \eqref{lesys} by $v$, the second equation by $u$ and integrating over $B_R$, we have
\begin{equation*}
\left\{ \begin{aligned}
 &\int_{B_R}DuDv-\int_{B_R} v^{p+1}=\int_{S_R}\frac{\partial u}{\partial\nu}v,\\
 &\int_{B_R}DuDv-\int_{B_R} u^{q+1}=\int_{S_R}\frac{\partial v}{\partial\nu}u,\\
  \end{aligned} \right.
\end{equation*}
hence
\begin{equation}\label{ph3}
\int_{B_R} u^{q+1}-\int_{B_R} v^{p+1}=\int_{S_R}(\frac{\partial u}{\partial\nu}v-\frac{\partial v}{\partial\nu}u).
\end{equation}
By \eqref{phh} and \eqref{ph3}, we get this lemma.
\end{proof}

\section{Proof of Theorem \ref{mainthm1}}
\numberwithin{equation}{section}
 \setcounter{equation}{0}
In this section, we prove Theorem \ref{mainthm1}. Let $\kappa=\frac{\alpha+\beta}{\alpha}$.
\begin{lem}\label{decayfore}
Suppose that $\gamma \in(0,1)$ and $(u,v)$ is a positive solution of system \eqref{lesys}. Then there exists a positive constant $C=C(N,p,q,\gamma)$ such that
\begin{equation*}
\int_{B_2}|\nabla u^\frac{\gamma}{2}|^2\leq C.
\end{equation*}
\end{lem}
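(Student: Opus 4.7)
The plan is to test the first equation $-\Delta u = v^p$ against $u^{\gamma-1}\varphi^2$, where $\varphi\in C_c^\infty(B_3)$ is a standard cutoff with $0\leq\varphi\leq 1$, $\varphi\equiv 1$ on $B_2$, and $|\nabla\varphi|\leq C$. Since $u$ is continuous and strictly positive, it is bounded below by a positive constant on $\overline{B_3}$, so $u^{\gamma-1}\varphi^2$ is a legitimate Lipschitz, compactly supported test function (if one worries about regularity at this step, one may replace $u^{\gamma-1}$ by $(u+\epsilon)^{\gamma-1}$ and let $\epsilon\downarrow 0$ using monotone convergence).

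First, integration by parts would yield the identity
\[
(\gamma-1)\int u^{\gamma-2}|\nabla u|^2\varphi^2 + 2\int u^{\gamma-1}\varphi\,\nabla u\cdot\nabla\varphi = \int v^p u^{\gamma-1}\varphi^2.
\]
Since $\gamma-1<0$ and the right-hand side is nonnegative, I would transfer the first term to the right and apply Young's inequality to the remaining mixed term, with parameter $\varepsilon=(1-\gamma)/2$, to absorb a fraction of $\int u^{\gamma-2}|\nabla u|^2\varphi^2$ back to the left. This produces the local bound
\[
\int u^{\gamma-2}|\nabla u|^2\varphi^2 \;\leq\; C\int u^{\gamma}|\nabla\varphi|^2 \;\leq\; C\int_{B_3} u^{\gamma}.
\]

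To close the argument I need a uniform bound on $\int_{B_3}u^{\gamma}$. Since the complementary case $pq\leq 1$ is already covered by Serrin--Zou, we may assume $pq>1$, and then Lemma \ref{esti} gives $\int_{B_3}u\leq C$. Hölder's inequality, together with $\gamma\in(0,1)$, then yields
\[
\int_{B_3}u^{\gamma} \;\leq\; |B_3|^{1-\gamma}\Bigl(\int_{B_3}u\Bigr)^{\gamma} \;\leq\; C.
\]
Using the identity $|\nabla u^{\gamma/2}|^2=(\gamma^2/4)\,u^{\gamma-2}|\nabla u|^2$ and the fact that $\varphi\equiv 1$ on $B_2$ gives the desired inequality.

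The only delicate step is the use of a negative-power test function, which is standard once one exploits that $u$ has a positive lower bound on compacta (or argues by $\epsilon$-regularisation). Beyond that technicality, the whole calculation rests on two structural facts: the favourable sign coming from $\gamma<1$, which allows the mixed term to be absorbed, and the already available uniform local $L^1$ estimate for $u$ from Lemma \ref{esti}.
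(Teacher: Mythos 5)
Your proposal is correct and follows essentially the same route as the paper: the weighted gradient inequality you derive by testing against $u^{\gamma-1}\varphi^2$ is exactly the inequality the paper imports as Lemma 2.2 of \cite{SZ1}, and your bound on $\int_{B_3}u^\gamma$ via H\"older and the local $L^1$ estimate of Lemma \ref{esti} is equivalent to the paper's use of Jensen's inequality together with $\overline{u}(r)\leq Cr^{-\alpha}$ (both requiring the standing reduction to $pq>1$, which you rightly make explicit).
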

\begin{proof}
Let  $\gamma \in(0,1)$ and $(u,v)$ be a positive solution of system \eqref{lesys}. By Lemma 2.2 of \cite{SZ1}, for any $\varphi\in C_c^\infty(\mathbb{R}^N)$, we have
\begin{equation}\label{decayfore1}
\int_{\mathbb{R}^N}\varphi^2 u^{\gamma-2}|\nabla u|^2\leq C\int_{\mathbb{R}^N}|\nabla \varphi|^2 u^\gamma
\end{equation}
with $C=C(N,\gamma)>0$. Let $\eta(x)\in C_c^\infty(\mathbb{R}^N)$ be a cut-off function:
\begin{equation*}\eta(x)=
\begin{cases} 1, & |x|<2,\\
  0, & |x|>4.
\end{cases}
\end{equation*}
Let $\varphi=\eta$ in \eqref{decayfore1}. Then by \eqref{decayfore1}, Lemma \ref{esti} and Jensen's inequality, we have
\begin{equation*}
\int_{B_2}|\nabla u^\frac{\gamma}{2}|^2\leq C\int_{2\leq|x|\leq 4}u^\gamma\leq C\int_2^4r^{N-1}\overline{u}^\gamma(r)\leq C\int_2^4r^{N-1}r^{-\alpha r}\leq C,
\end{equation*}
where $C=C(N,p,q,\gamma)>0$.
\end{proof}

\begin{lem}\label{decayfore101}
Suppose that $pq>1$ and $(u,v)$ is a bounded positive solution of system \eqref{lesys}. Then
\begin{equation*}
\int_{B_2}|\nabla u^{\frac{\kappa}{2}}|^2\leq C[\int_{B_4}u^{q+1}+1],
\end{equation*}
where $C=C(N,p,q)>0$.
\end{lem}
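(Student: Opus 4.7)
The plan is to run a Caccioppoli-type argument with the specific power $\kappa-1=(q+1)/(p+1)$, so that the resulting exponents line up with $u^{q+1}$. Pick a standard cut-off $\varphi \in C_c^\infty(\mathbb{R}^N)$ with $\varphi\equiv 1$ on $B_2$, $\mathrm{supp}\,\varphi\subset B_4$ and $|\nabla\varphi|\leq C$. Multiply the first equation $-\Delta u=v^p$ by $\varphi^2 u^{\kappa-1}$ and integrate by parts to get
\begin{equation*}
(\kappa-1)\int_{\mathbb{R}^N}\varphi^2 u^{\kappa-2}|\nabla u|^2 + 2\int_{\mathbb{R}^N}\varphi u^{\kappa-1}\nabla u\cdot\nabla\varphi = \int_{\mathbb{R}^N}\varphi^2 v^p u^{\kappa-1}.
\end{equation*}
Since $\kappa=1+(q+1)/(p+1)>1$, the coefficient $\kappa-1$ is strictly positive and Cauchy--Schwarz absorbs the cross term into the first. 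Using $|\nabla u^{\kappa/2}|^2=(\kappa/2)^2 u^{\kappa-2}|\nabla u|^2$ and $\varphi\equiv 1$ on $B_2$ this yields
\begin{equation*}
\int_{B_2}|\nabla u^{\kappa/2}|^2 \leq C\int_{\mathbb{R}^N}\varphi^2 v^p u^{\kappa-1} + C\int_{\mathbb{R}^N}u^{\kappa}|\nabla\varphi|^2,
\end{equation*}
with $C=C(N,p,q)>0$.

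It then remains to bound the two right-hand integrals by $\int_{B_4}u^{q+1}+1$. For the first, Lemma \ref{compair} (whose hypothesis is why we need $(u,v)$ bounded) gives $v^{p+1}\leq C u^{q+1}$, hence $v^p\leq C u^{p(q+1)/(p+1)}$. Combining with $\kappa-1=(q+1)/(p+1)$, the two exponents add up to exactly $q+1$, so $v^p u^{\kappa-1}\leq C u^{q+1}$ pointwise, and this integral is controlled by $C\int_{B_4}u^{q+1}$. For the second, the inequality $\kappa\leq q+1$ is equivalent to $pq\geq 1$, which holds by assumption, so Young's inequality gives $u^{\kappa}\leq u^{q+1}+1$ pointwise, and the bound $|\nabla\varphi|^2\leq C\mathbf 1_{B_4}$ produces another $C[\int_{B_4}u^{q+1}+1]$. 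Combining yields the stated estimate.

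The only real subtlety, rather than a technical obstacle, is the arithmetic of exponents: the precise value $\kappa=(\alpha+\beta)/\alpha$ is chosen so that both (i) $\kappa>1$, needed to make the Caccioppoli absorption produce a positive coefficient in front of $\int\varphi^2 u^{\kappa-2}|\nabla u|^2$, and (ii) $u^{\kappa-1}v^p$ collapses, via Lemma \ref{compair}, to a multiple of $u^{q+1}$. This scale-invariant choice is why the conclusion involves $u^{q+1}$ on the right and explains the jump from the $\gamma\in(0,1)$ range in Lemma \ref{decayfore} to $\kappa$, which can exceed $1$; everything else in the proof is an integration by parts together with Young's inequality.
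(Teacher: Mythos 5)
Your proof is correct and follows essentially the same route as the paper: testing $-\Delta u=v^p$ against $u^{\kappa-1}$ times a cutoff, using Lemma \ref{compair} to reduce $u^{\kappa-1}v^p$ to $Cu^{q+1}$, and exploiting $\kappa\leq q+1$ (equivalent to $pq\geq 1$) to control the cutoff term. The only differences are cosmetic: the paper moves the derivatives onto the cutoff to get a $\int u^\kappa\Delta\eta$ term and uses H\"older on $\int u^\kappa$, whereas you use Cauchy--Schwarz absorption and a pointwise Young inequality.
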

\begin{proof}
Let $\eta(x)$ be the cut-off function in the proof of Lemma \ref{decayfore}.
Multiplying the first equation of system \eqref{lesys} by $u^{\kappa-1}\eta$ and integrating over $\mathbb{R}^N$, we have
\begin{equation}\label{decayfore1011}
(\kappa-1)\int_{B_4} u^{\kappa-2}|\nabla u|^2\eta=\int_{B_4} u^{\kappa-1}v^p\eta+\frac{1}{\kappa}\int_{B_4}u^\kappa \Delta \eta.
\end{equation}
Since $\kappa=\frac{\alpha+\beta}{\alpha}=1+\frac{q+1}{p+1}$ and $pq>1$, then $\kappa<q+1$, and by Lemma \ref{compair}, we have
\begin{equation}\label{decayfore1012}
u^{\kappa-1}v^p\leq (\frac{q+1}{p+1})^{\frac{p}{p+1}}u^{\kappa-1} u^{\frac{p(q+1)}{p+1}}=(\frac{q+1}{p+1})^{\frac{p}{p+1}}u^{q+1}.
\end{equation}
Hence by \eqref{decayfore1011}, \eqref{decayfore1012} and H\"{o}lder inequlity, we have
\begin{equation}\label{decayfore1013}
\begin{aligned}
\int_{B_2}|\nabla u^{\frac{\kappa}{2}}|^2&\leq\int_{B_4}|\nabla u^{\frac{\kappa}{2}}|^2\eta\\
&\leq C\int_{B_4} u^{\kappa-2}|\nabla u|^2\eta\\
&\leq C\int_{B_4} (u^{\kappa-1}v^p+u^\kappa)\\
&\leq C\int_{B_4} u^{q+1}+C(\int_{B_4} u^{q+1})^\frac{\kappa}{q+1}\\
&\leq C[\int_{B_4} u^{q+1}+1],
\end{aligned}
\end{equation}
where $C=C(N,p,q)>0$.
\end{proof}

\begin{lem}\label{decayfore102}
Suppose that $\gamma \in(0,1)$,  $pq>1$ and $(u,v)$ is a bounded positive solution of system \eqref{lesys}. Then there exists $r_0\in[1,2]$ such that
\begin{equation*}
\int_{S_{r_0}}|\nabla u|\leq C,\int_{S_{r_0}}|\nabla v|\leq C,\int_{S_{r_0}}|\nabla u^\frac{\gamma}{2}|^2\leq C,\int_{S_{r_0}}|\nabla u^{\frac{\kappa}{2}}|^2\leq C[\int_{B_4}u^{q+1}+1],
\end{equation*}
and
\begin{equation*}
\int_{S_{r_0}}|D^2 u|^{\frac{p+1}{p}}\leq C\int_{B_2}|D^2 u|^{\frac{p+1}{p}},\int_{S_{r_0}}|D^2 v|^{\frac{q+1}{q}}\leq C\int_{B_2}|D^2 u|^{\frac{q+1}{q}},
\end{equation*}
where $C=C(N,p,q,\gamma)>0$.
\end{lem}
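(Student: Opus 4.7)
The strategy I propose is Fubini combined with a simultaneous Chebyshev/pigeonhole argument. For any nonnegative measurable function $f$ on the annulus $B_2\setminus B_1$, the co-area formula gives
$$\int_{B_2\setminus B_1} f\,dx \;=\; \int_1^2\Big(\int_{S_r} f\,dS\Big)\,dr,$$
so Chebyshev's inequality shows that the set of bad radii $\{r\in[1,2]:\int_{S_r}f>12\int_{B_2\setminus B_1}f\}$ has Lebesgue measure at most $1/12$. Since the lemma controls six quantities, the union of the six bad sets has measure at most $6/12<1$, and a common good radius $r_0\in[1,2]$ can be chosen at which all six spherical integrals satisfy $\int_{S_{r_0}}f\leq 12\int_{B_2}f$.

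It then remains to verify that each of the six annular integrals is bounded by the right-hand side claimed in the lemma. For $|\nabla u^{\gamma/2}|^2$ and $|\nabla u^{\kappa/2}|^2$, the estimates $\int_{B_2}|\nabla u^{\gamma/2}|^2\leq C$ and $\int_{B_2}|\nabla u^{\kappa/2}|^2\leq C[\int_{B_4}u^{q+1}+1]$ are exactly Lemmas \ref{decayfore} and \ref{decayfore101}. For the Hessian quantities $|D^2 u|^{(p+1)/p}$ and $|D^2 v|^{(q+1)/q}$, no auxiliary bound is needed because the lemma already keeps $\int_{B_2}|D^2 u|^{(p+1)/p}$ (respectively $\int_{B_2}|D^2 v|^{(q+1)/q}$) on the right-hand side, and these integrals are finite for the smooth function $u$ on the compact set $B_2$; the pigeonhole step therefore delivers the two Hessian inequalities directly.

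The remaining quantities are $\int_{S_{r_0}}|\nabla u|$ and $\int_{S_{r_0}}|\nabla v|$. I would use the pointwise identity $|\nabla u|=\frac{2}{\gamma}u^{1-\gamma/2}|\nabla u^{\gamma/2}|$ together with the Cauchy--Schwarz inequality to write
$$\int_{B_2}|\nabla u|\;\leq\;\frac{2}{\gamma}\Big(\int_{B_2}u^{2-\gamma}\Big)^{1/2}\Big(\int_{B_2}|\nabla u^{\gamma/2}|^2\Big)^{1/2}.$$
The second factor is controlled by Lemma \ref{decayfore}; the first, using $\gamma\in(0,1)$ and the boundedness of $u$, follows from $u^{2-\gamma}\leq \|u\|_\infty^{1-\gamma}\,u$ combined with the $L^1$ bound $\int_{B_2}u\leq C$ from Lemma \ref{esti}. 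For $v$ the same argument applies once one observes that the proof of Lemma \ref{decayfore} uses only that $v$ is positive and superharmonic (which holds via $-\Delta v=u^q\geq 0$) together with the spherical-average decay $\overline{v}(r)\leq Cr^{-\beta}$ from Lemma \ref{esti}; this yields the analogue $\int_{B_2}|\nabla v^{\gamma/2}|^2\leq C$, and Cauchy--Schwarz then controls $\int_{B_2}|\nabla v|$ as above.

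The main subtlety I anticipate is keeping $C$ independent of the particular bounded solution, since the interpolation step for $|\nabla u|$ passes through $\|u\|_\infty$; this is ultimately reconciled at the level of the main theorem by the Doubling Lemma of Pol\'a\v{c}ik--Quittner--Souplet, which normalizes bounded solutions before Lemma \ref{decayfore102} is invoked. Once this normalization is granted, the whole argument reduces to the Chebyshev step plus direct applications of the earlier lemmas.
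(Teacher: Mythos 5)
Your pigeonhole architecture is exactly the paper's: bound each of the six quantities on the annulus (or ball), observe via Fubini that each bad set of radii in $[1,2]$ has measure at most $1/7$ (you use $1/12$), and pick $r_0$ in the intersection of the complements. The treatment of $|\nabla u^{\gamma/2}|^2$, $|\nabla u^{\kappa/2}|^2$ and the two Hessian terms is also the paper's.

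There is, however, a genuine gap in how you control $\int_{B_2}|\nabla u|$ and $\int_{B_2}|\nabla v|$. Your Cauchy--Schwarz route forces you to bound $\int_{B_2}u^{2-\gamma}$ with $2-\gamma>1$, and since Lemma \ref{esti} only controls spherical averages (Jensen goes the wrong way for exponents above $1$), you resort to $u^{2-\gamma}\le\|u\|_\infty^{1-\gamma}u$. This makes your constant depend on $\|u\|_\infty$, which contradicts the statement of the lemma (by the paper's convention $C(N,p,q,\gamma)$ must be independent of the solution) and, more seriously, breaks the way the lemma is actually used: in the proof of Theorem \ref{mainthm1} one applies these estimates to the rescaled solutions $(R^\alpha u(Rx),R^\beta v(Rx))$ with $R\to\infty$, whose sup norms blow up like $R^\alpha\|u\|_\infty$, so a sup-norm--dependent constant destroys the final iteration. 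Your proposed rescue via the Doubling Lemma normalization does not appear in the paper's argument and does not obviously repair this, since the normalization is performed once at the level of reducing to bounded solutions, not uniformly along the rescaling sequence. The paper avoids the issue entirely by the interpolation (elliptic) inequality
\begin{equation*}
\int_{B_2}|\nabla u|\le C(N)\Bigl(\int_{B_4}|\Delta u|+\int_{B_4}|u|\Bigr),
\end{equation*}
and then bounding $\int_{B_4}|\Delta u|=\int_{B_4}v^p$ and $\int_{B_4}u$ by universal constants using Lemma \ref{esti} (together with Lemma \ref{compair} and the earlier energy bounds), with the analogous estimate for $v$. You should replace your Cauchy--Schwarz step by this route; the rest of your argument then goes through.
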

\begin{proof}
Let $\gamma \in(0,1)$. By Lemma \ref{esti}, Lemma \ref{decayfore} and Lemma \ref{decayfore101} and interpolation inequalities, there exists  $C_1=C_1(N,p,q,\gamma)>0$ such that
\begin{equation*}
\begin{aligned}
&\int_{B_2}|\nabla u|\leq C(N)(\int_{B_4}|\Delta u|+\int_{B_4}|u|)\leq C_1,\\
&\int_{B_2}|\nabla v|\leq C(N)(\int_{B_4}|\Delta v|+\int_{B_4}|v|)\leq C_1
\end{aligned}
\end{equation*}
and
\begin{equation*}
\int_{B_2}|\nabla u^\frac{\gamma}{2}|^2\leq C_1,~~\int_{B_2}|\nabla u^{\frac{\kappa}{2}}|^2\leq C_1[\int_{B_4}u^{q+1}+1].
\end{equation*}
\par Define
\begin{equation*}
\begin{aligned}
&A=\{r\in[1,2]:\int_{S_r}|\nabla u|>7 C_1\},\\
&B=\{r\in[1,2]:\int_{S_r}|\nabla v|>7 C_1\},\\
&C=\{r\in[1,2]:\int_{S_r}|\nabla u^\frac{\gamma}{2}|^2>7 C_1\},\\
&D=\{r\in[1,2]:\int_{S_r}|\nabla u^{\frac{\kappa}{2}}|^2>7C_1[\int_{B_4}u^{q+1}+1],\\
&E=\{r\in[1,2]:\int_{S_r}|D^2 u|^{\frac{p+1}{p}}>7\int_{B_2}|D^2 u|^{\frac{p+1}{p}}\},\\
&F=\{r\in[1,2]:\int_{S_r}|D^2 v|^{\frac{q+1}{q}}>7\int_{B_2}|D^2 v|^{\frac{q+1}{q}}\}.
\end{aligned}
\end{equation*}
Then
\begin{equation*}
7C_1 |A|\leq\int_A\int_{S_r}|\nabla u|\leq\int_1^2\int_{S_r}|\nabla u|\leq\int_{B_2}|\nabla u|\leq C_1,
\end{equation*}
hence
\begin{equation*}
|A|\leq\frac{1}{7},
\end{equation*}
where $|A|$ is the Lebesgue measure of $A$. Similarly, we have
$$|B|\leq\frac{1}{7},~~|C|\leq\frac{1}{7},~~|D|\leq\frac{1}{7},~~|E|\leq\frac{1}{7}~~\mbox{and}~~|F|\leq\frac{1}{7}.$$
Hence
$$|A\cup B\cup C \cup D \cup E\cup F|\leq\frac{6}{7}~~\mbox{and}~~|A^c\cap B^c\cap C^c\cap D^c \cap E^c\cap F^c|\geq\frac{1}{7},$$
where $G^c=[1,2]-G$ for $G\subseteq[1,2]$. Let $r_0\in A^c\cap B^c\cap C^c \cap D^c \cap E^c\cap F^c$. Then we obtain this lemma with $C=7\max\{C_1,1\}$.
\end{proof}

We shall fix this specific $r_0$ in the rest of the proof and define $F(r)=\int_{B_r}u^{q+1}$.
\begin{lem}\label{decayforgrand}
Suppose that  $pq>1$ and $(u,v)$ is a bounded positive solution of system \eqref{lesys}. Then there exists $C=C(N,p,q)>0$ such that
\begin{equation*}
(\int_{S_{r_0}}|Du|^{\frac{p+1}{p}})^{\frac{p}{p+1}}\leq C[F(4)^{\frac{p}{p+1}}+1],~~~(\int_{S_{r_0}}|Dv|^{\frac{q+1}{q}})^{\frac{q}{q+1}}\leq C[F(4)^{\frac{q}{q+1}}+1].
\end{equation*}
\end{lem}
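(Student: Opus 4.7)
The plan is to combine three ingredients: a Poincaré–Wirtinger–Sobolev inequality on the sphere $S_{r_0}$ applied componentwise to $\nabla u$, the Calderón–Zygmund type estimate of Lemma~\ref{lpesti}, and the pointwise comparison of Lemma~\ref{compair} together with the integral bounds of Lemma~\ref{esti}. The bookkeeping is set up so that the exponent $(p+1)/p$ is precisely what converts $\int v^{p+1}$ into an $L^{(p+1)/p}$ norm of $\Delta u$ inside $B_4$.

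First I would apply on the compact manifold $S_{r_0}$ the standard inequality
\[
\|\nabla u\|_{L^{(p+1)/p}(S_{r_0})} \le C\bigl(\|\nabla u\|_{L^{1}(S_{r_0})} + \|D^{2}u\|_{L^{(p+1)/p}(S_{r_0})}\bigr),
\]
obtained by writing each component of $\nabla u$ on $S_{r_0}$ as the sum of its mean and a mean-zero part (the mean is controlled by $L^1$, the mean-zero part by Poincaré–Wirtinger applied with exponent $(p+1)/p$, and the tangential derivative of $\nabla u$ is dominated by $|D^{2}u|$). The constant is uniform for $r_{0}\in[1,2]$ by rescaling. The first norm on the right is bounded by a constant via Lemma~\ref{decayfore102}, and the second satisfies, again by Lemma~\ref{decayfore102},
\[
\|D^{2}u\|_{L^{(p+1)/p}(S_{r_0})}^{(p+1)/p} \;\le\; C\,\|D^{2}u\|_{L^{(p+1)/p}(B_{2})}^{(p+1)/p}.
\]

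Next I would estimate $\|D^{2}u\|_{L^{(p+1)/p}(B_{2})}$ by applying Lemma~\ref{lpesti} with $k=(p+1)/p$ and $R=2$:
\[
\|D^{2}u\|_{L^{(p+1)/p}(B_{2})} \le C\bigl(\|\Delta u\|_{L^{(p+1)/p}(B_{4})} + \|u\|_{L^{1}(B_{4})}\bigr).
\]
The $L^{1}$ term is bounded by a constant via Lemma~\ref{esti} (using $\int_{B_4} u \le C$). For the other term, $-\Delta u = v^{p}$ gives
\[
\|\Delta u\|_{L^{(p+1)/p}(B_{4})}^{(p+1)/p} = \int_{B_{4}} v^{p+1} \le C\int_{B_{4}} u^{q+1} = C\,F(4),
\]
where the middle inequality is Lemma~\ref{compair} (applicable because $(u,v)$ is bounded and $pq>1$). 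Chaining the estimates yields $\|D^{2}u\|_{L^{(p+1)/p}(B_{2})} \le C(F(4)^{p/(p+1)}+1)$, and substituting back into the spherical inequality produces the claimed bound on $\nabla u$. For $v$ the argument is symmetric and slightly simpler: use $-\Delta v = u^{q}$ so that $\int_{B_{4}}|\Delta v|^{(q+1)/q} = F(4)$ directly without invoking Lemma~\ref{compair}, and repeat the same chain with exponent $(q+1)/q$ and the bounds in Lemma~\ref{decayfore102} for $v$.

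The main point to verify carefully is the first step: the Poincaré–Wirtinger–Sobolev inequality on $S_{r_0}$ must be applied to the vector field $\nabla u$, the tangential derivatives appearing must be majorized by the full Hessian $D^{2}u$, and the constant must be uniform over $r_{0}\in[1,2]$. Everything else is direct bookkeeping of the lemmas already available, with the two uses of $(p+1)/p$ (on the sphere via Lemma~\ref{decayfore102} and in the interior via Lemma~\ref{lpesti}) matching exactly so that the $v^{p+1}$ integrand produces the right power of $F(4)$.
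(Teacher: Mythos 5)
Your proposal is correct and follows essentially the same route as the paper: a first-order Sobolev/Poincar\'e--Wirtinger inequality on the sphere (the paper invokes Lemma~\ref{Sobolevimbedings} with $j=1$ and $\lambda=k=\frac{p+1}{p}$, which is exactly your mean-plus-mean-zero decomposition), the passage from $S_{r_0}$ to $B_2$ and the $L^1$ gradient bound via Lemma~\ref{decayfore102}, the Calder\'on--Zygmund estimate of Lemma~\ref{lpesti} with $\|u\|_{L^1(B_4)}\le C$ from Lemma~\ref{esti}, and finally $|\Delta u|^{(p+1)/p}=v^{p+1}\le Cu^{q+1}$ from Lemma~\ref{compair}. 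Your observation that the $v$-estimate needs no comparison lemma since $\int_{B_4}|\Delta v|^{(q+1)/q}=F(4)$ directly is also consistent with the paper's (omitted) symmetric argument.
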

\begin{proof}
By Lemma \ref{Sobolevimbedings}, Lemma \ref{lpesti}, Lemma \ref{esti}, Lemma \ref{compair}, Lemma \ref{decayfore102}, Jensen's inequality and the fact that $r_0\in[1,2]$, we have
\begin{equation*}
\begin{aligned}
(\int_{S_{r_0}}|Du|^{\frac{p+1}{p}})^{\frac{p}{p+1}}&\leq C(\int_{S_1}|D u(r_0,\theta)|^{\frac{p+1}{p}})^{\frac{p}{p+1}}\\
&\leq C[(\int_{S_1}|D_\theta Du(r_0,\theta)|^{\frac{p+1}{p}})^{\frac{p}{p+1}}+\int_{S_{r_0}}|Du|]\\
&\leq C[(\int_{S_{r_0}}|D^2u|^{\frac{p+1}{p}})^{\frac{p}{p+1}}+1]\\
&\leq C[(\int_{B_2}|D^2u|^{\frac{p+1}{p}})^{\frac{p}{p+1}}+1]\\
&\leq C[(\int_{B_4}|\Delta u|^{\frac{p+1}{p}})^{\frac{p}{p+1}}+1]\\
&\leq C[(\int_{B_4}v^{p+1})^{\frac{p}{p+1}}+1]\\
&\leq C[(\int_{B_4}u^{q+1})^{\frac{p}{p+1}}+1]= C[F(4)^{\frac{p}{p+1}}+1],
\end{aligned}
\end{equation*}
where $C=C(N,p,q)>0$. Similarly, we have
\begin{equation*}
(\int_{S_{r_0}}|Dv|^{\frac{q+1}{q}})^{\frac{q}{q+1}}\leq C[F(4)^{\frac{q}{q+1}}+1]
\end{equation*}
with $C=C(N,p,q)>0$.
\end{proof}

\begin{lem}\label{decayforu}
Suppose that $q\leq1$, $pq>1$, $(p,q)$ is subcritical and $(u,v)$ is a bounded positive solution of system \eqref{lesys}. Then there exist $C=C(N,p,q)>0$ and $a=a(N,p,q)\in [0,1)$ such that
\begin{equation*}
\int_{S_{r_0}}u^{q+1}\leq C[F(4)^a+1].
\end{equation*}
\end{lem}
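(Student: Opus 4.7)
My plan is to bound $\int_{S_{r_0}} u^{q+1}$ via Sobolev embeddings on $S^{N-1}$ applied to the two functions $u^{\kappa/2}$ and $u^{\gamma/2}$ (with $\gamma \in (0,1)$), whose tangential gradients on $S_{r_0}$ are controlled by Lemma \ref{decayfore102}, and then to interpolate to the exponent $q+1$.

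First, since $r_0 \in [1,2]$ the tangential gradient on $S^{N-1}$ is comparable to the full gradient on $S_{r_0}$. Lemma \ref{decayfore102} combined with Lemma \ref{Sobolevimbedings}(1) (taking $j=1$, $k=2$, and assuming $N\geq 4$) and the elementary $L^1$-bound $\int_{S_{r_0}} u^{\kappa/2} \leq C$ (coming from Lemma \ref{esti} via H\"older, since $\kappa \leq 2$ under the WLOG assumption $p\geq q$) yields
\[
\int_{S_{r_0}} u^{p^*} \leq C\bigl[F(4)+1\bigr]^{\lambda/2}, \qquad \lambda=\frac{2(N-1)}{N-3},\quad p^*=\frac{\kappa\lambda}{2}=\frac{\kappa(N-1)}{N-3},
\]
equivalently $\|u\|_{L^{p^*}(S_{r_0})} \leq C[F(4)+1]^{1/\kappa}$.

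Second, for every $\gamma\in (0,1)$ the same Sobolev embedding, now applied to $u^{\gamma/2}$ (whose gradient on $S_{r_0}$ is uniformly bounded by Lemma \ref{decayfore102}), produces an $F(4)$-independent bound
\[
\|u\|_{L^m(S_{r_0})}\leq C_{\gamma} \qquad \text{for every } m<\frac{N-1}{N-3}.
\]
If $q+1 \leq (N-1)/(N-3)$, choosing $\gamma$ with $\gamma(N-1)/(N-3)=q+1$ concludes the lemma with $a=0$. Otherwise, I invoke log-convexity: for $m<q+1<p^*$,
\[
\|u\|_{L^{q+1}(S_{r_0})} \leq \|u\|_{L^m(S_{r_0})}^{1-\theta}\, \|u\|_{L^{p^*}(S_{r_0})}^\theta, \qquad \frac{1}{q+1}=\frac{1-\theta}{m}+\frac{\theta}{p^*}.
\]
Raising to the $(q+1)$-th power and using $(x+1)^a \leq C(x^a+1)$ puts the bound in the desired form with exponent $a=\theta(q+1)/\kappa$, which I optimize by letting $m\to (N-1)/(N-3)^-$.

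The main obstacle is to verify $a<1$ throughout the stated range. The limiting exponent $a_0$ reduces after simplification to an explicit algebraic expression in $N$, $p$, $q$ (via $\kappa=1+(q+1)/(p+1)$), and the inequality $a_0<1$ must be established by combining the Sobolev hyperbola $\frac{1}{p+1}+\frac{1}{q+1}>1-\frac{2}{N}$ with the hypothesis $q\leq 1$; the sign analysis is delicate because $pq>1$ rules out the trivial bound $\kappa>q+1$. The low-dimensional cases $N=2$ (where Lemma \ref{Sobolevimbedings}(3) yields an $L^\infty$ bound) and $N=3$ (where Lemma \ref{Sobolevimbedings}(2) allows $\lambda$ arbitrarily large) make the argument immediate and are handled separately.
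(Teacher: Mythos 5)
Your construction coincides with the paper's own proof: the uniform bound $\|u(r_0,\cdot)\|_{L^{m}(S^{N-1})}\le C$ for every $m<\tfrac{N-1}{N-3}$ (from $\nabla u^{\gamma/2}$ and Lemma \ref{decayfore102}), the bound $\|u(r_0,\cdot)\|_{L^{p^*}(S^{N-1})}\le C[F(4)+1]^{1/\kappa}$ with $p^*=\tfrac{\kappa(N-1)}{N-3}$ (from $\nabla u^{\kappa/2}$), and interpolation to the exponent $q+1$; the paper's $\mu$ is your $p^*$, its $t$ is your $\theta$, and the inequality $q+1<\mu$ needed for the interpolation to be nontrivial follows from subcriticality. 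The genuine gap is that you stop exactly at the step that carries all the content of the lemma: you never prove $a<1$, you only announce that it ``must be established''. Without $a<1$ the statement is useless, since the feedback inequality in the proof of Theorem \ref{mainthm1} only closes when $\tfrac{p+a}{p+1}<1$.

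Moreover, this verification is not routine, and your own bookkeeping shows why. You correctly record the exponent as $a=\theta(q+1)/\kappa$, the factor $1/\kappa$ coming from $\|u\|_{L^{p^*}}=\|u^{\kappa/2}\|_{L^{2(N-1)/(N-3)}}^{2/\kappa}$. Letting $m\uparrow\tfrac{N-1}{N-3}$, the limiting interpolation parameter is $\theta_0=\bigl(\tfrac{N-3}{N-1}-\tfrac{1}{q+1}\bigr)\big/\bigl(\tfrac{N-3}{N-1}-\tfrac{1}{p^*}\bigr)$, and a short computation shows that $(q+1)\theta_0/\kappa<1$ is equivalent to $\tfrac{N-3}{N-1}\cdot\tfrac{p(q+1)}{p+1}<1$, i.e.\ to $\alpha>N-3$ (this is precisely the quantity $b(1)$ analyzed in Lemma \ref{2017-09}). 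That condition is \emph{not} implied by $q\le1$ together with subcriticality: for $N=10$, $p=2$, $q=1$ one has $pq>1$ and $\tfrac13+\tfrac12>\tfrac45$, yet $\alpha=6<7=N-3$, and indeed $\theta_0=\tfrac{25}{28}$, $\kappa=\tfrac53$, so your $a$ tends to $\tfrac{15}{14}>1$. The paper instead asserts in \eqref{decayforu13} the exponent $a(\gamma)=\tfrac{(q+1)t}{2}$ --- denominator $2$ rather than $\kappa$ --- for which $q\le1$ and $t<1$ immediately give $a<1$; the discrepancy is exactly the power $2/\kappa\ge1$ that you (correctly, in my reading) kept. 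So either you must justify the stronger estimate $\|u(r_0)\|_{p^*}\le C(\|D_\theta u^{\kappa/2}(r_0)\|_2+1)$ without the power $2/\kappa$, which is what the paper's chain implicitly uses, or the interpolation as you set it up does not prove the lemma on its full stated range.
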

\begin{proof}
By Lemma \ref{Sobolevimbedings}, Lemma \ref{esti}, Lemma \ref{decayfore102} and and the fact that $r_0\in[1,2]$, if $N\leq 3$, then we have
\begin{equation}\label{decayforu11}
\int_{S_1}u^{q+1}(r_0,\theta)d\theta\leq C
\end{equation}
and if $N\geq 4$, then we have
\begin{equation}\label{decayforu1}
\int_{S_1}u^{\frac{\gamma(N-1)}{N-3}}(r_0,\theta)d\theta\leq C,
\end{equation}
where $\gamma\in(0,1)$ and $C=C(N,p,q,\gamma)$.
Suppose that $q<\frac{2}{N-3}$. Then there exists $\gamma_0\in (0,1)$ such that
\begin{equation*}
q+1<\frac{\gamma_0(N-1)}{N-3}
\end{equation*}
and
\begin{equation}\label{decayforu12}
\int_{S_{r_0}}u^{q+1}\leq 2^{N-1}\int_{S_1}u^{q+1}(r_0,\theta)\leq2^{N-1}[\int_{S_1}u^{\frac{\gamma_0(N-1)}{N-3}}(r_0,\theta)]^{\frac{(q+1)(N-3)}{\gamma_0(N-1)}}\leq C.
\end{equation}
Therefore, if $N\leq3$ or $q<\frac{2}{N-3}$, we obtain this lemma for any $a\in[0,1)$ according to \eqref{decayforu11} and \eqref{decayforu12}. Hence in the following, we assume that $N\geq4$ and $q\geq\frac{2}{N-3}$. \par
Let $\mu=\frac{N-1}{N-3}\kappa=\frac{(N-1)(\alpha+\beta)}{(N-3)\alpha}$. Since $(p,q)$ is subcritical, by direct calculations we have
$$\alpha+\beta>N-2,~~\frac{\alpha+\beta+2}{\alpha+\beta}<\frac{N-1}{N-3}~~\mbox{and}~~q+1<\mu.$$
\par

For any $\gamma\in(0,1)$, let $t=t(\gamma)$ solve the equation
\begin{equation*}
\frac{1}{q+1}=\frac{N-3}{\gamma(N-1)}(1-t)+\frac{t}{\mu}.
\end{equation*}
Then
$$t=\frac{\frac{N-3}{\gamma(N-1)}-\frac{1}{q+1}}{\frac{N-3}{\gamma(N-1)}-\frac{1}{\mu}}\in (0,1).$$
By \eqref{decayforu1}, interpolation inequalities,  (1) of Lemma \ref{Sobolevimbedings}, Lemma \ref{lpesti}, Lemma \ref{esti}, Lemma \ref{decayfore102} and the fact that $r_0\in[1,2]$, we have
\begin{equation}\label{decayforu13}
\begin{aligned}
\|u(r_0)\|_{q+1}&\leq\|u(r_0)\|_{\frac{\gamma(N-1)}{N-3}}^{1-t}\|u(r_0)\|_\mu^t\\
&\leq C(\|D_\theta u^{\frac{\kappa}{2}}(r_0)\|_2^t+1)\\
&\leq C[(\int_{S_{r_0}}|\nabla u^{\frac{\kappa}{2}}|^2)^{\frac{t}{2}}+1)\\
&\leq C[F(4)^{\frac{t}{2}}+1].
\end{aligned}
\end{equation}
Hence by \eqref{decayforu13} and the fact that $r_0\in[1,2]$, we have
\begin{equation}\label{decayforu2}
\int_{S_{r_0}}u^{q+1}\leq C\int_{S_1}u^{q+1}(r_0,\theta)\leq C[F(4)^{a(\gamma)}+1],
\end{equation}
where $C=C(N,p,q,\gamma)>0$ and $a(\gamma)=\frac{(q+1)t}{2}$. Since $q\leq 1$ and $t\in (0,1)$,  for any $\gamma \in (0,1)$, we have
\begin{equation*}
a(\gamma)<1.
\end{equation*}
Therefore, choose any $\gamma_0\in(0,1)$ and let $a=a(\gamma_0)$ in \eqref{decayforu2}. Then, we obtain this lemma.
\end{proof}
$Proof~of~Theorem~\ref{mainthm1}.$ We only need to prove this theorem in the case of bounded solutions and $pq>1$.\par
Suppose that $q=\min\{p,q\}\leq1$, $(p,q)$ is subcritical and $(u,v)$ is a bounded positive solution of system \eqref{lesys}. Then $\max\{a, \frac{p+a}{p+1}, \frac{q+a}{q+1}\}=\frac{p+a}{p+1}$. By Lemma \ref{compair}, Lemma \ref{ph}, Lemma \ref{decayforgrand}, Lemma \ref{decayforu}, H\"{o}lder inequalities and the fact that $r_0\in [1,2]$, there exists $C=C(N,p,q)>0$ such that
\begin{equation*}
\begin{aligned}
&\int_{B_1}u^{q+1}\leq\int_{B_{r_0}}u^{q+1}\\
\leq& C\int_{S_{r_0}}(u^{q+1}+|\frac{\partial v}{\partial\nu}|u+|\frac{\partial u}{\partial\nu}|v)\\
\leq& C[F(4)^a+\|Dv\|_{L^{\frac{q+1}{q}}(S_{r_0})}\|u\|_{L^{q+1}(S_{r_0})}+\|Du\|_{L^{\frac{p+1}{p}}(S_{r_0})}\|v\|_{L^{p+1}(S_{r_0})}+1]\\
\leq& C[F(4)^{\frac{p+a}{p+1}}+1],
\end{aligned}
\end{equation*}
where $C=C(N,p,q)>0$.\par
Replacing $(u,v)$ by $(R^\alpha u(Rx),R^\beta v(Rx))$ in the above inequalities and changing variables yield that
\begin{equation*}
F(R)\leq C[F(4R)^{\frac{p+a}{p+1}}R^{\frac{(1-a)(N-2-\alpha-\beta)}{p+1}}+R^{N-2-\alpha-\beta}].
\end{equation*}
\par Since $u$ is bounded, there exist a large positive constant $c>0$ and a sequence of $\{R_i\}_{i=1}^{\infty}$ with $\lim \limits_{i\rightarrow \infty} R_i=+\infty$ such that $F(4R_i)\leq c F(R_i)$ (see \cite{CHL}, \cite{S}).
Therefore,
\begin{equation*}
F(R_i)\leq C[c^{\frac{p+a}{p+1}}F(R_i)^{\frac{p+a}{p+1}}R_i^{\frac{(1-a)(N-2-\alpha-\beta)}{p+1}}+R_i^{N-2-\alpha-\beta}].
\end{equation*}
Since $(p,q)$ is subcritical and $a\in[0,1)$, we have $\frac{p+a}{p+1}\in(0,1)$, $N-2-\alpha-\beta<0$ and $\frac{(1-a)(N-2-\alpha-\beta)}{p+1}<0$. Therefore, $F(R_i)$ is bounded, 
$$\int_{\mathbb{R}^N} u^{q+1}=\lim \limits_{i\rightarrow \infty} \int_{B_{R_i}}u^{q+1}=0,$$
and
$$u\equiv 0,~~ v\equiv 0,$$
which is a contradiction.

\section{Proof of Theorem \ref{mainthm}}
\numberwithin{equation}{section}
 \setcounter{equation}{0}
In this section, we prove Theorem \ref{mainthm}.

\begin{lem}\label{2017-09}
Suppose that $pq>1$, $(p,q)$ is subcritical, $\alpha=\max\{\alpha,\beta\}>N-3$ and $(u,v)$ is a bounded positive solution of system \eqref{lesys}. Then there exist $C=C(N,p,q)>0$ and $b=b(N,p,q)\in [0,1)$ such that
\begin{equation*}
\int_{S_{r_0}}u^{q+1}\leq C[F(4)^b+1].
\end{equation*}
\end{lem}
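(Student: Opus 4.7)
The plan is to mirror the proof of Lemma \ref{decayforu} step by step, substituting the hypothesis $\alpha > N-3$ for $q\leq 1$ only at the final algebraic check that the interpolation exponent is strictly less than one.

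First I would dispose of the easy cases. If $N\leq 3$, or if $N\geq 4$ and $q(N-3)<2$, then choosing $\gamma\in(0,1)$ large enough that $q+1\leq \gamma(N-1)/(N-3)$ and applying the Sobolev embedding from Lemma \ref{Sobolevimbedings} to $u^{\gamma/2}$, together with Lemma \ref{decayfore}, Lemma \ref{esti} and Lemma \ref{decayfore102}, would already give $\int_{S_{r_0}} u^{q+1}\leq C$, so any $b\in[0,1)$ works.

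For the main case $N\geq 4$ and $q(N-3)\geq 2$, I would set $\mu=(N-1)\kappa/(N-3)$, use subcriticality and $p\geq q$ to note that $q+1<\mu$, and for each $\gamma\in(0,1)$ define the interpolation exponent $t=t(\gamma)\in(0,1)$ by
\begin{equation*}
\frac{1}{q+1}=\frac{N-3}{\gamma(N-1)}(1-t)+\frac{t}{\mu}.
\end{equation*}
Then H\"older interpolation combined with the Sobolev embedding $W^{1,2}(S^{N-1})\hookrightarrow L^{2(N-1)/(N-3)}(S^{N-1})$ applied to $u^{\gamma/2}$ (controlling $\|u(r_0)\|_{\gamma(N-1)/(N-3)}$ by a constant via Lemma \ref{decayfore}) and to $u^{\kappa/2}$ (controlling $\|u(r_0)\|_\mu$ by $F(4)$ via Lemmas \ref{decayfore101}--\ref{decayfore102}), just as in Lemma \ref{decayforu}, yields
\begin{equation*}
\int_{S_{r_0}} u^{q+1}\leq C\bigl[F(4)^{b(\gamma)}+1\bigr]
\end{equation*}
with an explicit exponent $b(\gamma)$ that is decreasing in $\gamma$.

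The hard part will be to show that for some $\gamma\in(0,1)$, $b(\gamma)<1$; this is the only step where the new hypothesis enters. Letting $\gamma\to 1^{-}$ in the formula for $t(\gamma)$ and simplifying with $\kappa=(\alpha+\beta)/\alpha$ and $\alpha/\beta=(p+1)/(q+1)$, I expect the limit exponent to reduce to
\begin{equation*}
b_{0}=\frac{(p+1)\bigl[q(N-3)-2\bigr]}{(q+1)(N-3)}.
\end{equation*}
A direct rearrangement then gives $b_{0}<1 \iff (N-3)(pq-1)<2(p+1) \iff \alpha>N-3$, exactly the standing hypothesis. Fixing $\gamma_{0}\in(0,1)$ close enough to $1$ and setting $b:=b(\gamma_{0})\in[0,1)$ closes the proof.
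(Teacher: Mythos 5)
Your proposal is correct, and the reduction to the easy cases, the interpolation structure, and the passage $\gamma\to 1^-$ all mirror the paper's argument; but the key estimate is carried out along a genuinely different route. The paper does \emph{not} reuse the pair $\bigl(\|u(r_0)\|_{\gamma(N-1)/(N-3)},\|u(r_0)\|_\mu\bigr)$ with $\mu=\frac{N-1}{N-3}\kappa$ from Lemma \ref{decayforu}; instead it interpolates against $\|u(r_0)\|_\nu$ with $\frac{1}{\nu}=\frac{p}{p+1}-\frac{2}{N-1}$, controls that norm by the second-order Sobolev embedding $W^{2,l}(S^{N-1})\hookrightarrow L^{\nu}(S^{N-1})$, $l=\frac{p+1}{p}$, and then bounds $\int_{S_{r_0}}|D^2u|^{l}$ via Lemma \ref{lpesti} and $\|\Delta u\|_{L^l}^l=\int v^{p+1}\leq C\int u^{q+1}$; its limiting exponent is $b(1)=\frac{N-3}{N-1}p(q+1)-p$, and $b(1)-1=\frac{2(q+1)}{(N-1)\beta}(N-3-\alpha)<0$ gives exactly the same threshold $\alpha>N-3$ as your $b_0=\frac{(p+1)[q(N-3)-2]}{(q+1)(N-3)}<1$. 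Your version buys a shorter proof that leans only on the first-order energy bound $\int_{S_{r_0}}|\nabla u^{\kappa/2}|^2\leq C[F(4)+1]$ already established in Lemmas \ref{decayfore101}--\ref{decayfore102}, avoiding the $W^{2,l}$ machinery; the paper's version has the mild advantage of not depending on the power $\kappa$ of $u$ and of being the estimate that generalizes in Souplet's original scheme. One bookkeeping point you should make explicit: since $\|u(r_0)\|_\mu=\|u^{\kappa/2}(r_0)\|_{2(N-1)/(N-3)}^{2/\kappa}$, the exponent that $F(4)$ acquires from the high end of the interpolation is $t/\kappa$, not $t/2$, so that $b(\gamma)=\frac{(q+1)t(\gamma)}{\kappa}$; your formula for $b_0$ is consistent with this correct accounting, and the equivalence $b_0<1\iff(N-3)(pq-1)<2(p+1)\iff\alpha>N-3$ checks out.
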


\begin{proof}
By the proof of Lemma \ref{decayforu}, we only need to consider the case of $N\geq4$ and $q\geq\frac{2}{N-3}$.\par
Let $l=\frac{p+1}{p}$ and $\frac{1}{\nu}=\frac{1}{l}-\frac{2}{N-1}$. Since $(p,q)$ is subcritical, then
\begin{equation*}\frac{1}{\nu}<\frac{1}{l}-\frac{2}{N}=1-\frac{1}{p+1}-\frac{2}{N}<\frac{1}{q+1}~~\mbox{and}~~q+1<\nu.
\end{equation*}\par
For any $\gamma\in(0,1]$, let $s=s(\gamma)\in [0,1)$ solve the equation
\begin{equation*}
\frac{1}{q+1}=\frac{N-3}{\gamma(N-1)}(1-s)+\frac{s}{\nu}.
\end{equation*}
Then
$$s=\frac{\frac{N-3}{\gamma(N-1)}-\frac{1}{q+1}}{\frac{N-3}{\gamma(N-1)}-\frac{1}{\nu}}~~\mbox{and}~~s(1)=(p+1)(\frac{N-3}{N-1}-\frac{1}{q+1}).$$\par
By \eqref{decayforu1}, interpolation inequalities,  (1) of Lemma \ref{Sobolevimbedings}, Lemma \ref{lpesti}, Lemma \ref{esti} and the fact that $r_0\in[1,2]$, we have
\begin{equation}\label{2017-09-01}
\begin{aligned}
\|u(r_0)\|_{q+1}&\leq\|u(r_0)\|_{\frac{\gamma(N-1)}{N-3}}^{1-s}\|u(r_0)\|_\nu^s\\
&\leq C(\|D_\theta^2u(r_0,\theta)\|_l^s+1)\\
&\leq C+C(\int_{S_{r_0}}|D^2u|^l)^{\frac{s}{l}}\\
&\leq C+C(\int_{B_2}|D^2u|^l)^{\frac{s}{l}}\\
&\leq C+C(\int_{B_4}|\Delta u|^l)^{\frac{s}{l}}\\
&=C+C(\int_{B_4}v^{p+1})^{\frac{s}{l}}.
\end{aligned}
\end{equation}\par
Since $(u,v)$ is bounded, by \eqref{2017-09-01}, Lemma \ref{compair} and the fact that $r_0\in[1,2]$, we have
\begin{equation}\label{2017-09-02}
\int_{S_{r_0}}u^{q+1}
\leq C+C(\int_{B_4}v^{p+1})^{b(\gamma)}\leq C[(\int_{B_4}u^{q+1})^{b(\gamma)}+1],
\end{equation}
where $C=C(N,p,q,\gamma)>0$ and $b(\gamma)=\frac{(q+1)s}{l}=\frac{p(q+1)s}{p+1}\in C((0,1])$. Then $b(1)=\frac{N-3}{N-1}p(q+1)-p$.\par
Since $(p,q)$ is subcritical, by \eqref{alphabeta} we have $p=\frac{\alpha+2}{\beta}$, $q=\frac{\beta+2}{\alpha}$ and since $p\geq q$ and $\alpha>N-3$, we have
\begin{equation*}
b(1)-1=(q+1)[\frac{(N-3)(\alpha+2)}{(N-1)\beta}-\frac{\alpha}{\beta}]=\frac{2(q+1)}{(N-1)\beta}(-\alpha+N-3)<0.
\end{equation*}
Therefore, there exists $\gamma_1<1$  such that $a(\gamma_1)<1$. Let $a=a(\gamma_1)$ in \eqref{2017-09-02}. Then, we obtain this lemma.
\end{proof}
By Lemma \ref{compair}, Lemma \ref{ph}, Lemma \ref{decayforgrand}, Lemma \ref{2017-09} and scale invariance of the solutions, we can prove Theorem \ref{mainthm}. Since the proof is similar to the proof of Theorem \ref{mainthm1}, we omit it.

\end{document}